\renewcommand{\ALG@beginalgorithmic}{\scriptsize}
\newtheorem{theorem}{Theorem}
\newtheorem{result}{Result}
\newtheorem{assumption}{Assumption}
\newtheorem{lemma}[theorem]{Lemma}
\def\delequal{\mathrel{\ensurestackMath{\stackon[1pt]{=}{\scriptscriptstyle\Delta}}}}
\def\distequal{\mathrel{\ensurestackMath{\stackon[1pt]{=}{\scriptstyle d}}}}
\newcommand{\norm}[1]{\left\lVert#1\right\rVert}
\DeclarePairedDelimiter{\ceil}{\lceil}{\rceil}
\algrenewcommand\algorithmicrequire{\textbf{Require:}}
\algrenewcommand\algorithmicensure{\textbf{Postcondition:}}
\title{Efficient Rare-Event Simulation for Multiple Jump Events in Regularly Varying L\'evy Processes with Infinite Activities}
\author{
Xingyu Wang, Chang-Han Rhee\\
Department of Industrial Engineering and Management Sciences \\
Northwestern University, Evanston, IL, 60208  \\
\texttt{xingyuwang2017@u.northwestern.edu, chang-han.rhee@northwestern.edu} \\
}
\begin{document}

\maketitle
\begin{abstract}
In this paper we address the problem of rare-event simulation for heavy-tailed L\'evy processes with infinite activities. We propose a strongly efficient importance sampling algorithm that builds upon the sample path large deviations for heavy-tailed L\'evy processes, stick-breaking approximation of extrema of L\'evy processes, and the randomized debiasing Monte Carlo scheme. The proposed importance sampling algorithm can be applied to a broad class of L\'evy processes and exhibits significant improvements in efficiency when compared to crude Monte-Carlo method in our numerical experiments.
\end{abstract}
\section{INTRODUCTION}
\label{sec:intro}

In this paper, we propose a strongly efficient rare-event simulation algorithm for general L\'evy processes with heavy-tailed jump distributions characterized by regular variation. Specifically, our goal is to estimate probabilities of the form $\mathbb{P}(A_n)$ for large $n$, where $A_n = \{\bar{X}_n \in A\}$, $A$ is a subset of the Skorokhod path space, and $\bar{X}_n$ is a scaled L\'evy process $X$ with heavy-tailed jump distributions characterized by regular variation.
Such problems arise in many different applications such as ruin and risk theory \cite{asmussen2010ruin}, option pricing \cite{tankov2003financial}, and queuing networks \cite{dkebicki2015queues}.

Two major challenges arise when designing an efficient rare-event simulation algorithm for general L\'evy processes with heavy-tailed jumps. First, the nature of the rare events renders the crude Monte-Carlo method extremely inefficient when $n$ is large: assume the goal is to estimate $\mathbb{P}(A_n)$ with a given level of confidence on its relative error, then the number of samples required would approach $\infty$ as $n \rightarrow \infty$ and $\mathbb{P}(A_n)$ tends to $0$. In the light-tailed case, one typical solution is to perform an exponential change of measure and analyze a properly tilted process that induces a much higher probability of occurrence for the desired event. The theory of large deviations can be used to determine the right amount of exponential tilting.
For instance, when viewing risk processes from the perspective of large deviations, the asymptotic distribution of sample paths leading to ruination coincides with the distribution of the exponentially biased risk process parametrized by the solution of the Lundberg equation \cite{asmussen2010ruin}; and under the guidance of large deviation principles, importance sampling algorithms have been proposed to asymptotically optimally simulate rare events in a dynamic fashion \cite{dupuis2004importance}, or simulate rare events in queuing networks with established bound on required computational efforts \cite{boxma2019linear}.
Similarly, for L\'evy processes with heavy-tailed jumps, one would expect that the design of an efficient rare-event simulation algorithm entails the knowledge of large deviation results for the associated processes, since the large deviation principles not only characterize the decaying rate of $\mathbb{P}(A_n)$, but also describe the most likely scenario for the event to occur. Indeed, as revealed in \cite{rhee2016sample}, by solving an optimization problem concerning the minimal number of jumps $l^*$ required for a step function to trigger the target event, we see that, asymptotically, the sample paths leading to occurrence of rare events $A_n$ are those with $l^*$ large jumps. By exploiting this result to design a proper importance sampling distribution, \cite{chen2019efficient} proposes a strongly efficient rare-event simulation algorithm for compound Poisson processes and random walks with regularly varying jumps. 
The current paper extends this framework, and presents an importance sampling algorithm for rare-event simulation of general L\'evy processes with regularly varying jump distributions, beyond the compound Poisson processes.

The second difficulty lies in exact simulation of the sample path for general L\'evy processes. Unlike compound Poisson processes or random walks, the sample path of a general L\'evy process may not be exactly simulatable due to its infinite activities from the presence of either a Brownian motion or the infinitely many jumps within finite time intervals. Since many events $A_n$ that arise in applications can be characterized in terms of the extreme behavior of process $X$ within given time interval, one possible remedy is to simulate the extrema of the L\'evy process instead of the entire sample path. However, an explicit expression of the distribution of the extrema, or an exact simulation algorithm for extrema, is not available for L\'evy processes, except for a few specific cases such as spectrally one-sided processes \cite{michna2015distribution}\cite{chaumont2018short} or stable processes \cite{cazares2019exact}. 

In the current work, we address these issues by combining the stick-breaking approximation (SBA) idea for extrema of general L\'evy processes proposed in \cite{cazares2018geometrically} and the debiasing technique from \cite{rhee2015unbiased} with the mixture importance sampling from \cite{chen2019efficient}. The foundation of SBA is the detailed study of \cite{pitman2012convex} on the concave majorants of L\'evy processes, the distribution of which admits a Poisson-Dirichlet type of iterative structure, thus ensuring a geometrical convergence rate in SBA when estimating expectation of functionals on extrema of L\'evy processes. 
By studying the distributional properties of L\'evy processes, we show that our algorithm is strongly efficient for a broad class of L\'evy processes.

The rest of the paper is organized as follows. We provide preliminaries of the work in Section \ref{preliminaries}, and detail the algorithm in Section \ref{section_algorithm}. In Section \ref{section_algo_analysis} we establish a set of conditions under which the proposed algorithm is strongly efficient, and discuss the proper choice of parameters in the algorithm. In Section \ref{section_experiments} we demonstrate the  efficiency of the proposed importance sampling strategy with numerical experiments.

\section{PRELIMINARIES}
\label{preliminaries}

The importance sampling algorithm proposed in this paper builds upon the large deviations results for L\'evy processes with regularly varying increments. The related notions and results are introduced below. We use $(\mathbb{D},d)$ to denote the Skorokhod metric space of real-valued càdlàg functions with domain $[0,1]$. For any positive integer $l$, define
$$\mathbb{D}_l \delequal \{ \xi \in \mathbb{D}:\ \xi \text{ is a non-decreasing step function with $l$ jumps}, \xi(0) = 0 \}.$$
For $l = 0$, let $\mathbb{D}_0 = \{\textbf{0}\}$ where $\textbf{0}(t)= 0\ \forall t \in [0,1]$. Furthermore, for each $l\in\mathbb{N}^+$, we define
$\mathbb{D}_{< l} \delequal \bigcup_{j = 0}^{l - 1}\mathbb{D}_j$. 
Any L\'evy process $\{X(t): t\geq 0\}$ is characterized by its generating triplet $(c,\sigma^2,\nu)$, where $c \in \mathbb{R}$ is the drift parameter, $\sigma \geq 0$ is the magitude of the Brownian motion term in $X(t)$, and $\nu$ is the L\'evy measure of the process such that $\int (|x|^2\wedge 1) \nu(dx) < \infty$. See chapter 4 of \cite{sato1999levy} for details.

The heavy-tailed behavior of the positive jumps will be characterized by regular variation: recall that a Borel measurable function $\varphi: (0,\infty)\mapsto (0,\infty)$ is said to be regularly varying with index $\rho\in\mathbb{R}$ at $+\infty$ (denoted as $\varphi\in \text{RV}_\rho$) if for any $t>0$, $\lim_{x \rightarrow +\infty} \frac{\varphi(tx)}{\varphi(x)} = t^\rho$. For simplicity of the exposition, we focus on heavy-tailed behavior of positive jumps: In terms of the function $f(x) = \nu[x,\infty)$, we assume that $f \in RV_{-\alpha_+}$ with $\alpha_+ > 1$.

For any positive integer $n$, define the centered and scaled version of $X$ as 
$\Bar{X}_n(t) \delequal \frac{1}{n}X_n(t) - ct -\mu_1 t$
where $\mu_1 =  \int_{|x|\geq 1}x\nu(dx)$ and we assume $\mu_1 < \infty$.  For any $\beta > 0$, let $\nu_\beta$ be the measure concentrated on $(0,\infty)$ with $\nu_\beta(x,\infty) = x^{-\beta}$. For any positive integer $l$, use $\nu^l_\beta$ to denote the $l-$fold product measure of $\nu_\beta$ restricted onto $\{ y \in (0,\infty)^l:\ y_1 \geq y_2 \geq \cdots \geq y_l \}$, and define the measure
$C_{\beta}^l(\cdot) \delequal \mathbb{E}\Bigg[ \nu_\beta^l\big\{ y \in (0,\infty)^l:\ \sum_{j = 1}^l y_j \mathbbm{1}_{[U_j,1]}\in\cdot\big\} \Bigg]$
where $(U_j)_{j \geq 1}$ is an i.i.d. sequence of $\text{Unif}(0,1)$; while for $l = 0$, let $C^0_{\beta}$ be the Dirac measure on $\textbf{0}$. The following results describes the sample path large deviations for the corresponding scaled process $\Bar{X}_n$.

\begin{result}{(Theorem 3.1/3.4 of \cite{rhee2016sample})}\label{resultLevyLD} For any set $A$ that is Borel measurable in $\mathbb{D}$ and is bounded away from $\mathbb{D}_{<l^*}$ where $l^*\delequal \min\{l \in \mathbb{N}:\ \mathbb{D}_l \cap A \neq \emptyset \}$, we have
$$C^{l^*}_{\alpha_+}(A^\circ) \leq \liminf_{ n \rightarrow \infty }\frac{\mathbb{P}(\Bar{X}_n \in A)}{(n\nu[n,\infty) )^{l^*}}\leq \limsup_{ n \rightarrow \infty }\frac{\mathbb{P}(\Bar{X}_n \in A)}{(n\nu[n,\infty) )^{l^*}}\leq C^{l^*}_{\alpha_+}(A^-) $$
where $A^\circ,A^-$ are the interior and closure of $A$ respectively.
\end{result}{}

The rare events we concern in this paper are characterized by the extrema of the L\'evy process $X$. For any $t > 0$, we define the running supremum and infimum processes of $X$ as 
$\Bar{M}(t) = \sup_{s \in [0,t]}X(s)$.
Results in \cite{pitman2012convex} provide useful tools for studying $\bar{M}$ using the iterative structure of the concave majorant of L\'evy processes. 
Specifically, given any $t>0$ and a L\'evy process $X$ that is not a compound Poisson process with drift, the distribution of the pair $(\Bar{M}(t), X(t))$ admits the following expression
\begin{align}
    (\Bar{M}_t,X_t) \distequal ( \sum_{j \geq 1}(\xi_{j})^+,\sum_{j\geq 1}\xi_{j}  ) \label{SBA_preliminary}
\end{align}
where the symbol $\distequal$ denotes equivalence in distribution and $(\cdot)^+ \delequal \max\{\cdot, 0\}$, $\xi_{j}$'s are independent random variables such that $\xi_{j} \distequal X(l_j)$
where $(l_j)_{j \geq 1}$ is the stick-breaking sequence defined by $(U_j)_{j \geq 1}$, a sequence of i.i.d.\ $\text{Unif}(0,1)$, as follows:
$$l_1 = tU_1,\ \ l_j = U_j(t - l_1 - l_2 - \cdots l_{j-1})\ \forall j \geq 2.$$
A similar expression applies to the running infimum. See Theorem 1 in \cite{pitman2012convex} for details, and \cite{cazares2018geometrically} for the stick-breaking approximation (SBA) algorithm for efficient approximation of extrema of L\'evy processes. This probabilistic description of concave minorants has been adapted to the stick-breaking approximation (SBA) in \cite{cazares2018geometrically} that achieves geometric convergence in terms of the simulation of extrema of Levy processes. SBA is a major component of the algorithm proposed in this work, and the following result is frequently used to establish the rate of convergence when SBA is applied.

\begin{result}{(Lemma 10 in \cite{cazares2018geometrically})}\label{resultSBA} For a L\'evy process $X$ with generating triplet $(c,\sigma^2,\nu)$ that satisfies $I^1_+ = \int_{[1,\infty)}x\nu(dx)<\infty$, there exists a constant $C_X \in (0,\infty)$ such that for any $t > 0$:
$$\mathbb{E}\Bar{M}_t \leq C_X(t + \sqrt{t}).$$
\end{result}{}

To achieve unbiasedness for the proposed estimators, we apply the debiasing techniques used in \cite{rhee2015unbiased}:

\begin{result}{(Theorem 1 in \cite{rhee2015unbiased})}\label{resultDebias} Given a random variable $Y$ and a sequence of random variables $(Y_n)_{n \geq 0}$ such that $\lim_{n \rightarrow \infty}\mathbb{E}Y_n = \mathbb{E}Y$, and a positive integer-valued random variable $N$ with unbounded support such that $N$ is independent of $(Y_n)_{n\geq 0}$ and $Y$, if
$\sum_{n \geq 1} \mathbb{E}|Y_{n-1} - Y|^2 \Big/\mathbb{P}(N \geq n) < \infty,$
then for
$$Z = \sum_{n = 1}^N (Y_n - Y_{n - 1})\Big/\mathbb{P}(N \geq n), $$
(with the convention $Y_{-1} = 0$) $Z$ is an element of $L^2$, and
\begin{align*}
    \mathbb{E}Z = \mathbb{E}Y,\ \ \ \ \mathbb{E}Z^2 = \sum_{n \geq 0}\Bar{v}_n \Big/\mathbb{P}(N \geq n),
\end{align*}{}
where $\Bar{v}_n = \mathbb{E}|Y_{n - 1} - Y|^2 - \mathbb{E}|Y_n - Y|^2$.

\end{result}{}

The goal of the work is to propose an importance sampling algorithm for rare-event simulation of heavy-tailed L\'evy processes that achieves strong efficiency. Specifically, for a sequence of events $(A_n)_{n \geq 1}$ such that $\mathbb{P}(A_n)\rightarrow 0$ as $n\rightarrow \infty$, we say that a sequence of estimators $(L_n)_{n \geq 1}$ is unbiased and \textit{strongly efficient} if we have $\mathbb{E}L_n = \mathbb{P}(A_n)$ for any $n \geq 1$, and $\mathbb{E}L^2_n = \mathcal{O}(\mathbb{P}^2(A_n))$. Here, for two sequences of non-negative real numbers $(x_n)_{n \geq 1}$ and $(y_n)_{n \geq 1}$, we say $x_n = \mathcal{O}(y_n)$ if  $\limsup_{n\to\infty}\frac{x_n}{y_n} < \infty$. Besides, we write $x_n = o(y_n)$ for the two positive real sequences if $\lim_{n \rightarrow \infty}\frac{x_n}{y_n} = 0$.

\section{THE ALGORITHM}
\label{section_algorithm}

In this section, we describe the structure of the rare events we are interested in, and propose an importance-sampling algorithm for efficient estimation of their probability. For clarity, this section focuses on one running example which will be introduce shortly, and describes the algorithm tailored for the specific example. Nevertheless, it is worth mentioning that the principle underlying the algorithm proposed below enjoys greater flexibility and can be extended to more general cases. 

\subsection{The Rare Events $(A_n)_{n \geq 1}$ and the Process $X(t)$}
Define the set
\begin{align}
    A = \{\xi \in \mathbb{D}: \sup_{t \in [0,1]}\xi(t)\geq a; \sup_{t \in (0,1] }\xi(t) - \xi(t-) < b \}. \label{definition_of_A}
\end{align}{}
In other words, $\xi \in A$ if the supremum of $\xi$ has reached $a$, but no jump in $\xi$ is larger than $b$.
Furthermore, we make the following assumption about set $A$:
\begin{assumption}\label{assumption_setA}
$a,b>0,\ a/b\notin \mathbb{Z}.$
\end{assumption}
Consider $l^*\delequal \min\{l \in \mathbb{N}:\ \mathbb{D}_l \cap A \neq \emptyset \}$. In this case, we have $l^* = \ceil{a/b}$, and $l^* \geq 1$. Recall that for a step function $\xi$ to belong to set $A$, $\xi$ needs to have at least $l^*$ jumps. Moreover, it is easy to see that, under Assumption \ref{assumption_setA}, the set $A$ is bounded away from $\mathbb{D}_{<l^*}$, and $C^{l^*}_\beta(A^\circ) > 0$ for any $\beta > 0$.

We study a L\'evy process $\{X_t:t\geq 0\}$ with generating triplet $(c_X,\sigma^2,\nu)$. Since the case of compound Poisson processes were already treated in \cite{chen2019efficient}, we assume in this paper that $X$ is not a compound Poisson process with linear drift, which implies that either $\sigma > 0$ or $\nu(-1,1)=\infty$.  
Furthermore, we reiterate several assumptions: (1) $\int_{|x|>1}|x|\nu(dx) < \infty$ so $X_t \in L_1$ for any $t\geq 0$; (2) as for the heavy-tail behavior of the positive jumps, the function $f(x) = \nu[x,\infty)$ is regularly varying at $\infty$ with index $-\alpha_+ < -1$; (3) the drift coefficient $c_X$ is chosen specifically so that the process is already centered: $\mathbb{E}X_t = 0,\ \forall t > 0$.
In this case, the scaled and centered version of $X$ is $\bar{X}_n = \{X(nt)/n:\ t\in[0,1]\}$ for any $n \in \mathbb{Z}^+$.

Let $A_n \delequal \{\bar{X}_n \in A\}$. The goal is to propose an algorithm for estimating $\mathbb{P}(A_n)$. To achieve unbiasedness and strong efficiency of the algorithm, we need the following assumption regarding distributions of $X(t)$.
For a measure space $(\mathcal{X},\mathcal{F},\mu)$ and any $A \in \mathcal{F}$, denote the restriction of the measure $\mu$ on $A$ as
$\mu|_A( \cdot ) \delequal \mu( A \cap \cdot ).$ 
\begin{assumption}\label{assumption_Xdist}
For any $z_0 > 0$, there exist $C>0, \alpha > 0, \theta \in (0,1]$ such that for any $t > 0, z \geq z_0, x \in \mathbb{R}, \delta \in [0,1]$, we have
    \begin{align*}
        \mathbb{P}(X^{<z}(t) \in [x, x + \delta]) \leq \frac{C}{t^\alpha \wedge 1}\delta^\theta;
    \end{align*}
    where the process $X^{<z}$ is the L\'evy process with the generating triplet $(c_X,\sigma^2,\nu|_{(-\infty,z)})$. 
\end{assumption}
A process that has the same distribution as $X^{<z}$ can be obtained by removing all jumps larger than $z$ from $X$. Similarly, we define $X^{\geqslant z}$ as the compound Poisson process with the generating triplet $(0,0,\nu_{[z,\infty)})$, and $X^{\geqslant z}$ is understood as the compound Poisson process generated merely by all jumps larger than $z$ in $X$. Note that in Section 4, we show that Assumption 2 is a moderate condition.

\subsection{Importance Sampling Strategy and Construction of the Unbiased Estimator}\label{sectionIS}

Our algorithm builds on the construction of the importance sampling distribution in \cite{chen2019efficient}. Consider the rare event simulation problem for some fixed scaling level $n \in \mathbb{Z}^+$. For any $\gamma > 0$, define sets $B^\gamma_n \delequal \{\bar{X}_n \in B^\gamma\}$ where
$$B^\gamma \delequal \{ \xi \in \mathbb{D}: \#\{ t \in [0,1]: \xi(t) - \xi(t-) \geq \gamma \} \geq l^* \}; $$
namely, $\xi \in B^\gamma$ if and only if $\xi$ has at least $l^*$ jumps with size larger than $\gamma$. Now fix any $w \in (0,1)$, and define the following importance sampling distribution
\begin{align}
\mathbb{Q}(\cdot) = w\mathbb{P}(\cdot) + (1 - w) \mathbb{P}(\cdot | B^\gamma_n). \label{defQ}    
\end{align}

In the meantime, consider the following decomposition of $X$ at point $n\gamma$: $X = \widetilde{X} + J_n$ where the two independent processes $\widetilde{X} = X^{<n\gamma}, J_n = X^{\geqslant n\gamma}$ can be though of as the \textit{small-jump} and \textit{large-jump} processes of $X$, with generating triplets $(c_X,\sigma,\nu|_{(-\infty,n\gamma) })$ and $(0,0,\nu|_{[n\gamma,\infty)})$ respectively.

Now let us observe two facts: first, $\mathbb{Q}$ is absolutely continuous w.r.t. $\mathbb{P}$ and vise versa; second, using the decomposition above, we see that $X^{<n\gamma}$ admits the same marginal distribution under $\mathbb{P}$ and $\mathbb{Q}$, as $\mathbb{Q}$ only alters the distribution of the \textit{large-jump} process $J_n$. Therefore, to generate a sample path of $X$ under $\mathbb{Q}$, we can sample the large jump process $J_n$ from $\mathbb{Q}$, and then generate $\widetilde{X}$ under $\mathbb{P}$. To be more precise, we define set $E = \{ \xi \in \mathbb{D}:\ \sup_{t \in [0,1]}\xi(t) - \xi(t-) < b  \},$
and propose the following importance sampling estimator
\begin{align}
    L_n = Z_n(J_n)\mathbbm{1}_E( J_n/n ) \frac{d\mathbb{P}}{d\mathbb{Q}} = \frac{ Z_n(J_n)\mathbbm{1}_E( J_n/n )   }{ w + \frac{1 - w}{\mathbb{P}(B^\gamma_n)}\mathbbm{1}_{B^\gamma_n}(J_n)   } \label{defLn_IS}
\end{align}
where $J_n$ is sampled from $\mathbb{Q}$ and $Z_n$ is a stochastic function such that for any step function $\zeta$ on $[0,n]$,
$$\mathbb{E}Z_n(\zeta) = \mathbb{P}\Big( \sup_{t \in [0,n]}\widetilde{X}(t) + \zeta(t) \geq na  \Big).$$

Now it remains to describe: (a) the construction of $Z_n$; (b) the procedure of sampling $J_n$ from $\mathbb{Q}$ (in particular, sampling $J_n$ from $\mathbb{P}(\cdot|B^\gamma_n)$). For the first task, we combine the SBA algorithm with the debiasing technique as follows. To begin with, the nature of a jump process indicates the existence of some $k \in \{0,1,2,\cdots\}$ and sequences of real numbers $(z_i)_{i = 1}^k,(u_i)_{i = 1}^k$ with $u_i \in [0,n]$ and $(u_i)_{i=1}^k$ being distinct, such that $\zeta_k = \sum_{i=1}^k z_i\mathbbm{1}_{[u_i,n]}$. From now on we use the subscript $k$ to indicate the number of jumps in $\zeta$. Given the representation $\zeta_k = \sum_{i = 1}^k z_i\mathbbm{1}_{[u_i,n]}$, the interval $(0,n]$ can be partitioned into $\{ (u_i, u_{i+1}] \}_{i = 0}^k$ with the convention that $u_0 = 0, u_{k+1} = n$. For each $i = 0,1,\cdots,k$, we conduct the following stick-breaking procedure on $(u_i,u_{i+1}]$:
\begin{align}
    l^{(i)}_1 & = U^{(i)}_1(u_{i+1} - u_i); \label{defStickLength1} \\ 
    l^{(i)}_j & = U^{(i)}_j(u_{i+1} - u_i - l^{(i)}_1 - l^{(i)}_2 -\cdots - l^{(i)}_{j-1}) \ \ \forall j = 2,3,\cdots \label{defStickLength2}
\end{align}
where $(U^{(i)}_j)_{j \geq 1}$ is an i.i.d. sequence of $\text{Unif}(0,1)$. Next, for any given $0\leq i\leq k, j \geq 1$, independently sample $\xi^{(i)}_j \sim F_{\widetilde{X}}(\cdot,l^{(i)}_j)$ where we use $F_Y(\cdot,t)$ to denote the law of $Y_t$ for any L\'evy process $Y$. Let us define (for any $i = 0,1,\cdots,k$) $ \widetilde{M}^{(i)} \delequal \sum_{l = 0}^{i-1}\sum_{j \geq 1}\xi^{(l)}_j + \sum_{j \geq 1}(\xi^{(i)}_j)^+ $
with the convention that $\sum_{i = 0}^{-1}\cdot = 0$ and $(\cdot)^+ = \max\{0,\cdot\}$. Due to the coupling in \eqref{SBA_preliminary}, one can see that
$$\Big( \sup_{t \in (u_0,u_1]} \widetilde{X}_t,\sup_{t \in (u_1,u_2]} \widetilde{X}_t,\cdots,\sup_{t \in (u_k,u_{k+1}]} \widetilde{X}_t \Big)\distequal \Big( \widetilde{M}^{(0)},\widetilde{M}^{(1)},\cdots,\widetilde{M}^{(k)} \Big).$$

Recall our current task: unbiased estimation for expectation of the indicator random variable  
\begin{align*}
    Y_n^*(\zeta_k) = \mathbbm{1}\Big\{ \max_{i = 0,1,\cdots,k}\widetilde{M}^{(i)} + \zeta_k(u_i) \geq na  \Big\}
\end{align*}
given $\zeta_k$.
To apply the debiasing technique, the next step is to define a sequence of random variables $(Y_{n,m}(\zeta_k))_{m \geq 1}$ and approximate $Y^*_n(\zeta_k)$, where the subscript $m$ indicates the approximation level of SBA employed by $Y_{n,m}$. Specifically, for any $m \geq 0$, define
\begin{align*}
    \widetilde{M}^{(i)}_m = \sum_{l = 0}^{i - 1}\sum_{ j \geq 0 }\xi^{(l)}_j + \sum_{j = 1}^{ \ceil{ \log_2(n^2) } + m }( \xi^{(i)}_j )^+
\end{align*}
where $\ceil{x}$ denotes the smallest integer that is larger than or equal to $x$. 

Several remarks about the term $ \widetilde{M}^{(i)}_m$: (a) As an approximation to $\widetilde{M}^{(i)}\distequal \sup_{t \in (u_i,u_{i+1}]}\widetilde{X}_t$, $\widetilde{M}^{(i)}_m$ differs from $\widetilde{M}^{(i)}$ as it only inspects the increments of $\widetilde{X}$ on finitely many sticks; (b) Term $\ceil{ \log_2(n^2) }$ dictates that: as far as SBA is concerned, the algorithm always performs at least $\ceil{\log_2(n^2)}$ SBA steps at the scaling level $n$; this choice serves to ensure the strong efficiency of the algorithm, and would not increase the expected computational time significantly. 

Now, by defining $Y_{n,m}(\zeta_k) = \mathbbm{1}\Big\{ \max_{i = 0,1,\cdots,k}\widetilde{M}^{(i)}_m + \zeta_k(u_i) \geq na  \Big\}$
for any $m \geq 0$ and let $Y_{n,-1}(\cdot) = 0$, we construct the desired unbiased estimator as follows:
\begin{align}
    Z_n(\zeta_k) = \sum_{m = 0}^\tau \Big(Y_{n,m}(\zeta_k) - Y_{n,m-1}(\zeta_k) \Big)\Big/\mathbb{P}(\tau \geq m) \label{defZn_IS},
\end{align}
where the randomized truncation index $\tau$, independent of everything else, is chosen to be geometrically distributed with law $\mathbb{P}(\tau > m) = \rho^m$ for some $\rho \in (0,1)$ in our algorithm. Due to $\tau$ being finite almost surely, the number of $\xi^{(i)}_j$  we need to generate for evaluation of $Z_n(\zeta_k)$ is finite and depends on the value of $\tau$. The said parametrization will be justified in Section 4 as we see that $(L_n)_{n \geq 1}$ is strongly efficient.

\subsection{Sampling from $\mathbb{P}(\cdot|B^\gamma_n)$}

Below we revisit the problem of sampling the \textit{large-jump} process $J_n$ from the conditional distribution $\mathbb{P}(\cdot|B^\gamma_n)$, and propose Algorithm \ref{algoSampleJnFromQ}. The rationale of the algorithm can be made clear once we observe the following facts, and the argument therein is a direct application of point transform and augmentation for Poisson random measures; for details, see Chapter 5 of \cite{resnick2007heavy}.

First, to simulate $J_n$ (under the original law $\mathbb{P}$), it suffices to simulate a Poisson random measure $N_n$ on $[0,n]\times \mathbb{R}^+$ with intensity measure $\textbf{Leb}[0,n]\times \nu_n$ where $\nu_n(\cdot) = \nu\big( \cdot \cap [n\gamma,\infty) \big).$ The Poisson random measure $N_n$ admits the expression
$$N_n(\cdot)=\sum_{i = 1}^{\widetilde{N}_n}\mathbbm{1}\{(S_i,W_i) \in\ \cdot\ \}$$
where $\widetilde{N}_n\sim\text{Poisson}(n\cdot\nu[n\gamma,\infty))$ is the number of simulated points in $N_n$, $(S_i)_{i \geq 1}$ is an iid sequence of $\text{Unif}(0,n)$, and $(W_i)_{i \geq 1}$ is an iid sequence from the distribution $\nu_n(\cdot)/\nu_n[n\gamma,\infty)$; here we interpret $S_i$ as the arrival time of the $i-$th large jump, $W_i$ as its height, and $\widetilde{N}_n$ as the number of jumps in $J_n$ on $[0.n]$. Next, consider the simulation of a Poisson random measure with intensity measure $\nu_n$ using the inversion function:
$$Q^{\leftarrow}_n (y)\delequal{} \inf\{s > 0: \nu_n[s,\infty) < y\}.$$  
\begin{algorithm}[H]
  \caption{Efficient Estimation of $\mathbb{P}(A_n)$}\label{algoISnoARA}
  \begin{algorithmic}[1]
    \Require $w \in (0,1), \gamma > 0, \rho \in (0,1)$ 

    \If{$\text{Unif}(0,1) < w$} \Comment{Sample $J_n$ from $\mathbb{Q}$}
        \State Sample $J_n = \sum_{i = 1}^k z_i \mathbbm{1}_{[u_i,n]}$ from $\mathbb{P}$
    \Else
        \State Sample $J_n = \sum_{i = 1}^k z_i \mathbbm{1}_{[u_i,n]}$ from $\mathbb{P}(\cdot|B^\gamma_n)$ using Algorithm \ref{algoSampleJnFromQ}
    \EndIf
    \State Let $u_0 = 0, u_{k+1} = n$.

    \State Sample $\tau \sim \text{Geom}(\rho)$ \Comment{Decide Truncation Index $\tau$}

    \For{$i = 0,1,\cdots,k$} \Comment{Generate Stick Lengths, and Decide Increments} 
        \State Sample $U^{(i)}_1 \sim \text{Unif}(0,1)$. Let $l^{(i)}_1 = U^{(i)}_1(u_{i+1} - u_i)$
        \State Sample $\xi_{i,1}\sim F_{\widetilde{X}}(\cdot,l^{(i)}_1)$
        \For{$j = 2,3,\cdots,\ceil{\log_2(n^2)} + \tau$}
            \State Sample $U^{(i)}_j\sim \text{Unif}(0,1)$. Let $l^{(i)}_j = U^{(i)}_j(u_{i+1} - u_i - l^{(i)}_1 - l^{(i)}_2 - \cdots - l^{(i)}_{j - 1})$
            \State Sample $\xi_{i,j} \sim F_{\widetilde{X}}(\cdot,l^{(i)}_j)$
        \EndFor
        \State Let $l^{(i)}_{ \ceil{\log(n^2)} + \tau + 1 } = u_{i+1} - u_i - l^{(i)}_1 - l^{(i)}_2 - \cdots - l^{(i)}_{\ceil{\log(n^2)} + \tau}$
        \State Sample $\xi_{i,\ceil{\log_2(n^2)} + \tau + 1} \sim F_{\widetilde{X}}(\cdot,l^{(i)}_{\ceil{\log_2(n^2)}+\tau+1})$
    \EndFor

    \For{$m = 0,1,\cdots,\tau$} \Comment{Evaluate $Y_{n,m}$}
        \For{$i = 0,1,2,\cdots,k$}
            \State Let $\widetilde{M}^{(i)}_m = \sum_{l = 0}^{i-1}\sum_{j =1}^{ \ceil{\log_2(n^2)} + \tau + 1}\xi^{m}_{l,j} + \sum_{j =1}^{ \ceil{\log_2(n^2)} + \tau }(\xi^{m}_{i,j})^+   $
        \EndFor 
        \State Let $Y_{n,m} = \mathbbm{1}\big\{ \max_{i = 0,1,\cdots,k} \widetilde{M}^{(i)}_m + J_n(u_i)  \geq na  \big\}$
    \EndFor

\State Let $Z_n = Y_{n,0} + \sum_{m = 1}^\tau ( Y_{n,m} - Y_{n,m-1} )\big/ \rho^{m-1}$ \Comment{Return the Estimator $L_n$}
\If{ $ \max_{i =1,\cdots,k}z_i > b $ }
    \State \textbf{Return} $L_n = 0$.
\Else 
    \State Let $\lambda_n = n\nu[n\gamma,\infty),\ p_n = 1 - \sum_{l = 0}^{l^* - 1}e^{-\lambda_n}\frac{\lambda_n^l}{l!},\ I_n = \mathbbm{1}\{ J_n \in B^\gamma_n \}$
    \State \textbf{Return} $L_n = Z_n/(w + \frac{1-w}{p_n}I_n)$
\EndIf 
    
  \end{algorithmic}
\end{algorithm}

\begin{algorithm}[H]
  \caption{Simulation of $J_n$ under $\mathbb{P}(\cdot | B^\gamma_n)$}\label{algoSampleJnFromQ}
  \begin{algorithmic}[1]
    \Require $n \in \mathbb{N}, l^* \in \mathbb{N}, \gamma > 0$, the Lévy measure $\nu$.
\State Sample $k \sim \text{Poisson}( n\cdot\nu[n\gamma,\infty) )$ conditioned on $\{k \geq l^*\}$
\State Sample $\Gamma_1,\cdots,\Gamma_k \stackrel{\text{i.i.d.}}{\sim} \text{Unif}[0,\nu[n\gamma,\infty)]$
\State Sample $U_1,\cdots,U_k \stackrel{\text{i.i.d.}}{\sim} \text{Unif}[0,n]$
\State \textbf{Return } $J_n = \sum_{i = 1}^k Q^{\leftarrow}_n(\Gamma_i)\mathbbm{1}_{[U_i,n]}$
    
  \end{algorithmic}
\end{algorithm}

This inverse function has the property that
    $y \leq \nu_n[s,\infty) \Leftrightarrow Q^{\leftarrow}_n (y) \geq s.$
    Therefore, for iid Exponential (with rate $1$) random variables $\{E_i\}_{i \in \mathbb{Z}^+}$ and the corresponding running sum $\Gamma_i = \sum_{j = 1}^i E_j$, it is known that 
    $$\sum_{i:\Gamma_i \leq \nu_n[n\gamma,\infty)}\delta_{Q^{\leftarrow}_n (\Gamma_i)}$$
    is the desired Poisson random measure, where $\delta_x$ denotes Dirac measure at $x$. Now, by augmenting $\{Q^{\leftarrow}_n(\Gamma_i)\}_{i \geq 1}$ with uniformly distributed random marks on $[0,n]$, we have that
    $$N_n \distequal{} \sum_{\Gamma_i \leq \nu_n[n\gamma,\infty)}\delta_{\big(U_i,Q^{\leftarrow}_n (\Gamma_i)\big)}; \ \ J_n \distequal{}\sum_{\Gamma_i \leq \nu_n[n\gamma,\infty)}Q^{\leftarrow}_n (\Gamma_i)\mathbbm{1}_{[U_i,n]} $$
    where $(U_i)_{i \geq 1}$ is a sequence of iid $\text{Unif}(0,n)$ random variables that are independent of $\{\Gamma_i\}_{i \geq 1}$. Lastly, the condition $\bar{X}_n \in B^\gamma$ is equivalent to $\sup\{i:\Gamma_i \leq \nu_n[n\gamma,\infty)\} \geq l^*$. For any $k \geq l^*$, by further conditioning on the event $\big\{\sup\{i:\Gamma_i \leq \nu_n[n\gamma,\infty)\} = k\big\}$, the distribution of $(\Gamma_1,\cdots,\Gamma_k)$ is the same as that of the order statistics of $k$ iid random variables from $\text{Unif}[0,\nu_n[n\gamma,\infty)]$. With the difficulty of sampling from $\mathbb{P}(\cdot|B^\gamma_n)$ resolved, we yield an importance sampling strategy that is readily implementable, and we detail the steps in Algorithm \ref{algoISnoARA},

\section{ANALYSIS OF THE ALGORITHM}
\label{section_algo_analysis}

This section is devoted to theoretical aspects of the proposed algorithm. In Theorem \ref{  thmStrongEfficiency } we show that, under Assumptions \ref{assumption_setA} and \ref{assumption_Xdist}, the proposed algorithm is strongly efficient when parametrized properly. In Theorem \ref{thmDistX}, by analyzing the distributional properties of L\'evy processes, we show that Assumption \ref{assumption_Xdist} can be verified by inspecting the behavior of L\'evy measure $\nu$ when close to the origin, and Assumption \ref{assumption_Xdist} is satisfied by a broad class of L\'evy processes with infinite activities.

\subsection{Strong Efficiency of $(L_n)_{n \geq 1}$}
We state the main result regarding the efficiency of the importance sampling algorithm. See Appendix \ref{appendixProof1} for the proof.

\begin{theorem}\label{  thmStrongEfficiency }
Suppose that Assumption \ref{assumption_setA} and Assumption \ref{assumption_Xdist} are in force, and
 $\gamma$ (which characterizes the set $B^\gamma$) and $\rho$ (which determines the distribution of $\tau \sim \text{Geom}(\rho)$) are as follows.
\begin{itemize}
    \item Choose $\gamma\in(0,\frac{a - (l^*-1)b}{l^*})$ such that $\frac{a - (l^*-1)b}{\gamma}$ is not an integer.
    \item Let $\alpha,\theta$ be the values stated in Assumption \ref{assumption_Xdist}, and choose 
    $$\delta \in (1/\sqrt{2},1),\ \alpha_3 \in (0,\frac{\theta}{\alpha}),\ \alpha_4 \in (0, \frac{\theta}{2\alpha}),\ \alpha_2 \in (0, (\alpha_3/2)\wedge 1),\ \alpha_1 \in (0,\frac{\theta}{\alpha\alpha_2}).$$
   Pick $\rho$ such that 
   $$1 > \rho > \sqrt{\max\{ \delta^\alpha, \frac{1}{\delta\sqrt{2}}, \delta^{\theta\alpha_2 - \alpha\alpha_1}, \delta^{\theta - \alpha\alpha_3}, \delta^{-\alpha_2 + \frac{\alpha_3}{2}} \}}.$$
\end{itemize}
Then, $(L_n)_{n \geq 1}$ is unbiased and strongly efficient for $(A_n)_{n \geq 1}$; namely; 
$$\mathbb{E}^\mathbb{Q}[L_n] = \mathbb{P}(A_n),\ \ \ \mathbb{E}^\mathbb{Q}[L^2_n] = \mathcal{O}(\mathbb{P}^2(A_n)).$$
\end{theorem}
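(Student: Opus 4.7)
The plan is to prove unbiasedness via a change-of-measure calculation, and then bound the second moment of $L_n$ by decomposing along the event $B^\gamma_n$ that the large-jump process already witnesses the rarity.

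\textbf{Unbiasedness.} I would first verify that $\mathbb{E}[Z_n(\zeta)]=\mathbb{P}\!\left(\sup_{t\in[0,n]}\widetilde{X}(t)+\zeta(t)\geq na\right)$ for any fixed step function $\zeta$. The square-summability condition of Result~\ref{resultDebias} applied to $Y_{n,m}(\zeta)$ reduces to the decay of $\widetilde{M}^{(i)}-\widetilde{M}^{(i)}_m$ obtained from \eqref{SBA_preliminary} and Result~\ref{resultSBA}; its verification is subsumed by the variance analysis below. Unbiasedness of $L_n$ then follows by conditioning on $J_n$ (under which $\widetilde{X}$ still has its $\mathbb{P}$-law, since $\mathbb{Q}$ only reweights $J_n$) and applying the change of measure $d\mathbb{P}/d\mathbb{Q}$. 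The choice $\gamma<(a-(l^*-1)b)/l^*\leq b$ ensures every jump of $\widetilde{X}/n$ is automatically below $b$, so that $\mathbbm{1}_E(J_n/n)$ correctly encodes the ``no jump exceeds $b$'' constraint in the definition of $A$.

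\textbf{Decomposition of the second moment.} Using $d\mathbb{P}/d\mathbb{Q}=\bigl(w+\tfrac{1-w}{p_n}\mathbbm{1}_{B^\gamma_n}\bigr)^{-1}$, I split
\begin{align*}
\mathbb{E}^{\mathbb{Q}}[L_n^2]\;\leq\;\frac{p_n}{1-w}\,\mathbb{E}^{\mathbb{P}}\!\left[Z_n^2\,\mathbbm{1}_{E\cap B^\gamma_n}\right]+\frac{1}{w}\,\mathbb{E}^{\mathbb{P}}\!\left[Z_n^2\,\mathbbm{1}_{E\cap(B^\gamma_n)^c}\right].
\end{align*}
Result~\ref{resultLevyLD} gives that both $p_n$ and $\mathbb{P}(A_n)$ are of order $(n\nu[n,\infty))^{l^*}$. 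On $E\cap(B^\gamma_n)^c$, the large-jump process $J_n$ has fewer than $l^*$ jumps, each bounded by $nb$, so $\bar{X}_n\in A$ forces $\widetilde{X}$ to contribute more than $n(a-(l^*-1)b)>nl^*\gamma$; the choice of $\gamma$ is exactly what turns this into an event requiring at least $l^*+1$ jumps of size $\geq n\gamma$, hence of probability $o(\mathbb{P}(A_n))$ by Result~\ref{resultLevyLD}, and combining with a uniform bound on $\mathbb{E}[Z_n^2\mid J_n]$ yields $o(\mathbb{P}^2(A_n))$ for the second summand. The first summand is $\mathcal{O}(\mathbb{P}^2(A_n))$ once we show $\mathbb{E}[Z_n^2\mid J_n]\leq C$ uniformly on $B^\gamma_n\cap E$.

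\textbf{Bounding the debiased variance.} Let $\delta_m:=\mathbb{P}(Y_{n,m}\neq Y_n^{\ast}\mid J_n)$. Since $Y_{n,m}\leq Y_n^\ast$ are indicators, Result~\ref{resultDebias} gives $\mathbb{E}[Z_n^2\mid J_n]=\sum_{m\geq 0}(\delta_{m-1}-\delta_m)/\rho^m$, which is bounded provided $\delta_m$ decays faster than $\rho^{2m}$ uniformly in $J_n$. The event $\{Y_{n,m}\neq Y_n^\ast\}$ requires simultaneously (i) a nontrivial SBA residual $\widetilde{M}^{(i)}-\widetilde{M}^{(i)}_m$ over a stick whose length contracts geometrically with mean rate $1/2$, controlled in $L^1$ by Result~\ref{resultSBA}; and (ii) the truncated maximum $\widetilde{M}^{(i)}_m$ sitting in a window of width comparable to (i), just below $na-\zeta(u_i)$. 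Partitioning the probability space along dyadic scales $\delta^{m\alpha_i}$ for the stick length, the residual, and the gap, and applying Markov's inequality on the atypical regimes together with Assumption~\ref{assumption_Xdist} on the typical regime, should produce bounds of the form
\begin{align*}
\delta_m\leq C\max\{\delta^{m\alpha},\,(1/(\delta\sqrt{2}))^m,\,\delta^{m(\theta\alpha_2-\alpha\alpha_1)},\,\delta^{m(\theta-\alpha\alpha_3)},\,\delta^{m(-\alpha_2+\alpha_3/2)}\}.
\end{align*}
The stated hypothesis on $\rho$ says exactly that $\rho^2$ exceeds each of these five geometric rates, so the series converges and $\mathbb{E}[Z_n^2\mid J_n]$ is uniformly bounded on $B^\gamma_n\cap E$.

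\textbf{Main obstacle.} The most delicate step is item (ii) above: Assumption~\ref{assumption_Xdist} gives an anticoncentration bound for $\widetilde{X}(t)$ at a deterministic $t$, but $\widetilde{M}^{(i)}_m$ is a maximum over a finite SBA reconstruction, not a single marginal. The idea is that $\widetilde{M}^{(i)}_m$ can be written as a sum of positive parts of independent $\widetilde{X}$ increments over known stick lengths; conditioning on all but one of those lengths reduces the anticoncentration question to Assumption~\ref{assumption_Xdist} applied to a single marginal whose time parameter is bounded below in the good regime. The constraint $\delta>1/\sqrt{2}$ is precisely what makes the geometric-mean stick length beat $\delta^m$, so that the time scale in Assumption~\ref{assumption_Xdist} is not too small and the density bound is usable; balancing these competing scales through $\alpha_1,\alpha_2,\alpha_3,\alpha_4$ is the core of the calculation.
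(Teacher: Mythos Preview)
Your overall architecture matches the paper: change measure, split along $B^\gamma_n$, and reduce everything to bounding $\delta_m=\mathbb{P}(Y_{n,m}\neq Y^*_n\mid J_n)$ by the five geometric rates in the hypothesis. The anticoncentration idea in your ``Main obstacle'' paragraph is also essentially what the paper does. However, the treatment of the $(B^\gamma_n)^c$ term contains a genuine gap.

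You write that $\bar X_n\in A$ on $(B^\gamma_n)^c$ is an event of probability $o(\mathbb P(A_n))$, and that ``combining with a uniform bound on $\mathbb E[Z_n^2\mid J_n]$ yields $o(\mathbb P^2(A_n))$''. This step does not go through. The estimator $Z_n$ is a debiased telescoping sum and is \emph{not} bounded, so you cannot convert $\mathbb E[Z_n^2\mathbbm 1_{E\cap(B^\gamma_n)^c}]$ into a constant times $\mathbb P(\bar X_n\in A\cap(B^\gamma)^c)$; a uniform bound $\mathbb E[Z_n^2\mid J_n]\le C$ only gives $\mathbb E[Z_n^2\mathbbm 1_{(B^\gamma_n)^c}]\le C$, which is $\mathcal O(1)$. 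The paper's remedy is to insert the rarity \emph{inside} the debiasing sum: since $\{Y_{n,m}\neq Y^*_n\}\subset\{Y^*_n=1\}$, one has $\delta_m\le\sqrt{\delta_m}\,\sqrt{\mathbb P(Y^*_n=1\mid J_n)}$, whence
\[
\mathbb E[Z_n^2\mid J_n]\;\le\;\sqrt{\mathbb P(\bar X_n\in A\mid J_n)}\;\sum_{m\ge 0}\frac{\sqrt{\delta_m}}{\mathbb P(\tau\ge m)}.
\]
This is precisely where the condition $\rho>\sqrt{\rho_0}$ (rather than $\rho>\rho_0$) is consumed: the sum $\sum_m\sqrt{\delta_m}/\rho^m$ converges because $\delta_m=o(\rho_0^m)$ and $\rho^2>\rho_0$. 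After Jensen, $\mathbb E[Z^2_{n,2}]\le C\sqrt{\mathbb P(\bar X_n\in A\cap(B^\gamma)^c)}$, so one now needs that square root to be $\mathcal O(\mathbb P^2(A_n))$, i.e.\ $A\cap(B^\gamma)^c$ must be bounded away from $\mathbb D_{<4l^*}$, not merely $\mathbb D_{<l^*+1}$ as you suggest. Your sentence ``$\rho^2$ exceeds each of these five geometric rates, so the series converges'' applies the square-root budget in the wrong place: on $B^\gamma_n$ the plain bound $\sum_m\delta_m/\rho^m$ already converges under $\rho>\rho_0$; the extra square is reserved for the H\"older splitting on $(B^\gamma_n)^c$.

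A smaller point: the conditional second moment is not uniformly bounded on $B^\gamma_n\cap E$. The union bound over the $k+1$ inter-jump intervals and the bound $\mathbb P(u_1<n\delta^{m\alpha_1})\le k\,\delta^{m\alpha_1}$ both carry a factor $k=$ (number of large jumps), so the paper gets $\mathbb E[Z_n^2(\zeta_k)]\le \tilde C k$ and then absorbs the $k$ via the Poisson tail $\sum_{k\ge l^*}k\,e^{-\lambda_n}\lambda_n^k/k!\le\lambda_n^{l^*}$. This is easy to repair, but your stated ``uniform on $B^\gamma_n\cap E$'' is too strong as written.
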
{}

\subsection{Distributional Property of Small-Jump Processes $X^{<z}$}
Below we provide a sufficient condition for Assumption~\ref{assumption_Xdist}, and show that a broad class of L\'evy processes therefore can be addressed by the proposed algorithm. 
In particular, the conditions below verifies Assumption \ref{assumption_Xdist} with $\theta = 1$, which is equivalent to showing Lipschitz continuity of the law of $X^{<z}$.

First of all, if $\sigma > 0$, then for a fixed $\gamma_0 > 0$ and any $\gamma \geq \gamma_0$, we have the decomposition
$X^{<\gamma}(t) = \sigma B(t) + Y^{<\gamma}(t) $
where $B$ is a standard Brownian motion, $Y^{<\gamma}$ is a Levy process with generating triplet $(0,0,\nu|_{(-\gamma,\gamma)})$, and the two processes are independent. Now for any $x \in \mathbb{R}$ and $\delta \in (0,1)$, we have
\begin{align*}
    \mathbb{P}(X^{<\gamma}(t) \in [x,x+\delta])&=\int_{\mathbb{R}}\mathbb{P}(\sigma B(t) \in [x-y,x-y+\delta])\cdot\mathbb{P}(Y^{<\gamma}(t) = dy) \\
    &\leq\frac{1}{\sigma\sqrt{2\pi}}\cdot\frac{\delta}{\sqrt{t}}.
\end{align*}
Therefore, Assumption \ref{assumption_Xdist} holds with $\theta = 1, \alpha = 1/2.$ 
From now on, we focus on the case where $\sigma = 0$. 
In addition, we also assume that $\nu(\mathbb{R}) = \infty$, because otherwise $X$ is a compound Poisson process and this case has already been addressed by \cite{chen2019efficient}. 
We say that any measurable function $h:(0,\infty)\mapsto(0,\infty)$ is regularly varying at $0$ with index $\rho$ if, for $\varphi(x) = h(1/x)$, we have $\varphi \in \text{RV}_{-\rho}$. 
\begin{theorem}\label{thmDistX} For a fixed $\gamma_0 > 0$ and a L\'evy process $\{X(t): t\geq 0\}$ with generating triplet $(0,0,\nu)$, suppose that we have some Borel measure $\mu$ such that 
\begin{itemize}
    \item $(\nu - \mu)|_{(-\gamma_0,\gamma_0)}\text{ is a positive measure}$;
    \item the function $f:(0,\infty)\mapsto(0,\infty)$ defined as $f(x) = \mu\big( (-\infty,-x]\cup[x,\infty) \big)$ is regularly varying at 0 with index $-(\alpha + \epsilon)$ where $\alpha \in (0,2), \epsilon \in (0,2 - \alpha)$.
\end{itemize}
Then there exists some $C < \infty$ such that for any $\gamma \geq \gamma_0$
$$\norm{ f_{X^{<\gamma}(t)} }_\infty \leq \frac{C}{t^{1/\alpha}\wedge 1}\ \ \forall t > 0 $$
where $\{X^{<\gamma}(t):\ t>0\}$ is the L\'evy process with generating triplet $(0,0,\nu|_{(-\gamma,\gamma)})$ and $f_{X^{<\gamma}(t)}$ is the density of distribution of $X^{<\gamma}(t)$.
\end{theorem}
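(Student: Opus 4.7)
The plan is to bound the density of $X^{<\gamma}(t)$ by the $L^1$-norm of its characteristic function via Fourier inversion, and then to extract polynomial decay of that characteristic function from the regular variation of $f$ near $0$. By the L\'evy--Khintchine formula,
\[
\bigl|\mathbb{E}e^{iuX^{<\gamma}(t)}\bigr| = \exp\bigl(-t\Psi_\gamma(u)\bigr),\qquad \Psi_\gamma(u) \delequal \int_{(-\gamma,\gamma)}(1-\cos(ux))\,\nu(dx).
\]
Since $1-\cos\ge 0$ and $\nu-\mu$ is a positive measure on $(-\gamma_0,\gamma_0)\subseteq(-\gamma,\gamma)$, I obtain the $\gamma$-free lower bound
\[
\Psi_\gamma(u)\;\ge\;\Phi(u)\delequal \int_{(-\gamma_0,\gamma_0)}(1-\cos(ux))\,\mu(dx)\qquad\text{for all }\gamma\ge\gamma_0,
\]
which decouples the rest of the argument from $\gamma$.

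The heart of the proof is showing $\Phi(u)\gtrsim |u|^{\alpha+\epsilon/2}$ for all sufficiently large $|u|$. Let $\mu^*$ denote the pushforward of $\mu|_{(-\gamma_0,\gamma_0)}$ under $|\cdot|$, so that $\mu^*(s,\gamma_0)=F(s)\delequal f(s)-f(\gamma_0)$ still lies in $\text{RV}_{-(\alpha+\epsilon)}$ at $0$. Using the elementary inequality $1-\cos(y)\ge 2y^2/\pi^2$ for $|y|\le\pi$, I get, for $|u|\ge \pi/\gamma_0$,
\[
\Phi(u)\;\ge\;\frac{2u^2}{\pi^2}\int_0^{\pi/|u|}s^2\,\mu^*(ds).
\]
Integration by parts---where the boundary term at $0$ vanishes because $s^2F(s)\sim s^{2-(\alpha+\epsilon)}\to 0$, which is exactly where the hypothesis $\alpha+\epsilon<2$ is used---combined with Karamata's theorem gives $\int_0^R s^2\,\mu^*(ds)\sim \frac{\alpha+\epsilon}{2-(\alpha+\epsilon)}R^2F(R)$ as $R\downarrow 0$. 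Setting $R=\pi/|u|$ yields $\Phi(u)\gtrsim F(\pi/|u|)$, and Potter's bound on the slowly varying part of $F(\pi/|u|)\in \text{RV}_{\alpha+\epsilon}$ (in $|u|$, at $\infty$) upgrades this to $\Phi(u)\ge c|u|^{\alpha+\epsilon/2}$ for all $|u|\ge U_0$, with constants $c,U_0$ depending only on $\mu$, $\gamma_0$, $\alpha$, $\epsilon$.

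With the polynomial lower bound in hand, $\int_{\mathbb{R}}e^{-t\Phi(u)}du<\infty$, so Fourier inversion is valid and $\norm{f_{X^{<\gamma}(t)}}_\infty\le \frac{1}{2\pi}\int_{\mathbb{R}}e^{-t\Phi(u)}du$. I split the integral at $|u|=U_0$: the low-frequency piece is at most $2U_0$, and the high-frequency piece, after the substitution $v=(ct)^{1/(\alpha+\epsilon/2)}u$, is bounded by $C t^{-1/(\alpha+\epsilon/2)}$. For $t\le 1$, the high-frequency piece dominates, and since $1/(\alpha+\epsilon/2)<1/\alpha$ and $t\le 1$, it is at most $C't^{-1/\alpha}$. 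For $t\ge 1$, using $e^{-t\Phi(u)}\le e^{-\Phi(u)}$ reduces to the bounded $t=1$ integral. Combining the two regimes produces $\norm{f_{X^{<\gamma}(t)}}_\infty\le C/(t^{1/\alpha}\wedge 1)$ uniformly in $\gamma\ge\gamma_0$. The main obstacle I anticipate is the careful bookkeeping of slowly varying factors in the Karamata/Potter step so that the polynomial exponent comes out at $\alpha+\epsilon/2$ uniformly in large $|u|$; uniformity in $\gamma$ itself is essentially automatic, since $\Phi$ is constructed purely from $\mu|_{(-\gamma_0,\gamma_0)}$ and never involves $\gamma$.
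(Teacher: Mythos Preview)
Your proof is correct and follows the same overall architecture as the paper's: use the L\'evy--Khintchine representation, exploit $\nu\ge\mu$ on $(-\gamma_0,\gamma_0)$ to get a $\gamma$-free lower bound on the real part of the exponent, establish polynomial growth of $\int(1-\cos(ux))\,\mu(dx)$ for large $|u|$, and then apply Fourier inversion. The difference lies in how that polynomial growth is obtained. The paper compares $\mu|_{(-\gamma_0,\gamma_0)}$ to the pure power-law measure $c\,x^{-1-\alpha}\,dx$ on $(0,\infty)$: it fixes $C_0=\int_0^\infty(1-\cos y)\,y^{-1-\alpha}\,dy$, partitions $[\theta/|z|,M/|z|]$ into geometrically spaced subintervals, and uses Potter bounds plus uniform continuity of $1-\cos$ on each subinterval to show $\int_{(0,\gamma_0)}(1-\cos(zx))\,\nu(dx)\ge \tfrac{cC_0}{2}|z|^{\alpha}$ for $|z|$ large; the final density bound then comes out with exponent exactly $1/\alpha$. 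You instead use Jordan's inequality $1-\cos y\ge 2y^2/\pi^2$ on $[-\pi,\pi]$ and Karamata's theorem on the truncated second moment $\int_0^{\pi/|u|}s^2\,\mu^*(ds)$ to get $\Phi(u)\gtrsim F(\pi/|u|)$, and then Potter bounds give $\Phi(u)\ge c|u|^{\alpha+\epsilon/2}$, which you only afterwards weaken to the stated $t^{-1/\alpha}$ for $t\le 1$. Your route is shorter and more in the spirit of the classical Orey--Blumenthal--Getoor index arguments; the paper's route is more hands-on and produces the target exponent $\alpha$ directly without the intermediate $\alpha+\epsilon/2$. A further minor difference: the paper first proves the bound for the process with triplet $(0,0,\mu|_{(-\gamma_0,\gamma_0)})$ and then transfers it to $X^{<\gamma}$ by convolution with an independent remainder, whereas you bound the characteristic function of $X^{<\gamma}$ itself in one step via $\Psi_\gamma\ge\Phi$; the two are equivalent, but your formulation makes the uniformity in $\gamma\ge\gamma_0$ more transparent.
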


For the proof, see Appendix \ref{appendixProof2}. An immediate consequence is as follows.
Define a function $g(x) = \nu\big( (\infty,-x)\cup(x,\infty) \big)$. 
If $g$ is regularly varying at $0$ with index $\beta > 0$, then Assumption \ref{assumption_Xdist} holds, and the proposed algorithm is strongly efficient. Intuitively, since we are excluding the simpler cases where $\sigma > 0$ or  $\nu(\mathbb{R})<\infty$, we must have $\lim_{x\downarrow 0}g(x)=\infty$. 
As long as $g(\cdot)$ approaches $\infty$ at a faster rate than some $1/x^\beta$ with $\beta > 0$, Assumption \ref{assumption_Xdist} is valid. 

\subsection{Sketch of Proof for Theorem \ref{  thmStrongEfficiency }}

By performing a change of measure and plugging in the exact value of $d\mathbb{Q}/d\mathbb{P}$ (see \eqref{defQ}):
\begin{align}
   \nonumber \mathbb{E}^{\mathbb{Q}}[L^2_n] & = \int Z^2_{n}(J_n)\mathbbm{1}_{E}(J_n / n) \frac{d\mathbb{P}}{d\mathbb{Q}}\frac{d\mathbb{P}}{d\mathbb{Q}}d\mathbb{Q} = \int Z^2_{n}(J_n)\mathbbm{1}_{E}(J_n/n) \frac{d\mathbb{P}}{d\mathbb{Q}}d\mathbb{P} \\ \nonumber
    & = \int Z^2_{n}(J_n)\mathbbm{1}_{E \cap B^\gamma_n}(J_n/n)\frac{d\mathbb{P}}{d\mathbb{Q}}d\mathbb{P} + \int Z^2_{n}(J_n)\mathbbm{1}_{E \cap (B^\gamma_n)^c}(J_n/n)\frac{d\mathbb{P}}{d\mathbb{Q}}d\mathbb{P} 
\leq  
\frac{\mathbb{P}(B^\gamma_n)}{1-w}\mathbb{E}[Z_{n,1}^2] + \frac{1}{w}\mathbb{E}[Z_{n,2}^2], 
\end{align}
where $Z_{n,1} = Z_{n}(J_n)\mathbbm{1}_{E\cap B^\gamma}(J_n / n), Z_{n,2} = Z_{n}(J_n)\mathbbm{1}_{E \cap (B^\gamma)^c}(J_n/n)$. Using Result \ref{resultLevyLD}, we have $\mathbb{P}(B^\gamma_n) = \mathcal{O}(\mathbb{P}(A_n))$ as both $B^\gamma$ and $A$ are bounded away from $\mathbb{D}_{<l^*}$. Then strong efficiency follows immediately once we have
\begin{align}
    \mathbb{E}Z^2_{n,1} = \mathcal{O}(\mathbb{P}(A_n)); \label{proofGoal1} \\
    \mathbb{E}Z^2_{n,2} = \mathcal{O}(\mathbb{P}^2(A_n)). \label{proofGoal2}
\end{align}
Fix some notations: we use $\zeta$ denote a step function, and save the index $k$ to indicate the number of large jumps. For instance, the event $\{ J_n = \zeta_k \}$ is equivalent to the event that $J_n$ has $k$ jumps. Note that on this set, $J_n$ admits the representation
$J_n \distequal{} \zeta_k = \sum_{i = 1}^k z_i\mathbbm{1}_{[u_i,n]} $
where  $z_1,\cdots,z_k$ are i.i.d. samples from the distribution $\nu(\cdot \cap [n\gamma,\infty))\big/\nu[n\gamma,\infty)$, and $u_1 \leq u_2 \leq \cdots \leq u_k$ are order statistics of $k$ i.i.d. $\text{Unif}(0,n)$. Now note that 
\begin{align}
    \mathbb{E}Z^2_{n,1} \leq\sum_{k \geq l^*} \mathbb{E}[ Z^2_n(J_n)\ |\ J_n = \zeta_k ]\mathbb{P}(J_n\text{ has k jumps}) = \sum_{k \geq l^*} \mathbb{E}[ Z^2_n(\zeta_k)]e^{-\lambda_n}\lambda^k_n/k! \label{proofGoal1Again}
\end{align}
with $\lambda_n = n\nu[n\gamma,\infty)$. Therefore, to show \eqref{proofGoal1}, it suffices to show the existence of a constant $C$ such that $\mathbb{E}Z^2_n(\zeta_k) \leq Ck$ for any $k = 1,2,\cdots$. To see this, by plugging this bound into R.H.S. of \eqref{proofGoal1Again} we will get
\begin{align}
    \mathbb{E}Z^2_{n,1}\leq C\sum_{k \geq l^*}ke^{-\lambda_n}\lambda^k_n/k! \leq C \lambda_n^{l^*} \sum_{k \geq l^*} e^{-\lambda_n}\frac{\lambda_n^{k - l^*} }{(k-l^*)!} = C\cdot( n\nu[n\gamma,\infty) )^{l^*}
\end{align}
and \eqref{proofGoal1} follows immediately from large deviation principles (Result \ref{resultLevyLD}) and the fact that $\nu$ is regularly varying. To bound $\mathbb{E}Z^2_n(\zeta_k)$, recall that $Z_n$ is an unbiased estimator, so from Result \ref{resultDebias} we have $\mathbb{E}Z^2_n(\zeta_k) \leq \sum_{m \geq 0} \mathbb{P}\Big(Y_{n,m}(\zeta_k) \neq Y^*_n(\zeta_k) \Big)\Big/\mathbb{P}(\tau\geq m)$, where $Y_{n,m}$ and $Y^*_n$ are indicator functions defined in Section \ref{sectionIS}. Then it remains to bound $\mathbb{P}\Big(Y_{n,m}(\zeta_k) \neq Y^*_n(\zeta_k) \Big)$, the probability that the supreme of a (non-compound-Poisson) L\'evy process crossed a certain barrier while its $m$ step SBA estimation did not. 

To illustrate the idea, we henceforth focus on a simplified scenario. Fix a constant $c>0$ and recall that small-jump process $\widetilde{X}$ is a (non-compound-Poisson) L\'evy process with bounded jumps and satisfies Assumption \ref{assumption_Xdist}. Define $M = \sup_{0 \leq t \leq 1}\tilde{X}(t)$. Using the coupling in \eqref{SBA_preliminary}, we have $M \distequal{} \sum_{i \geq 1}(\xi_i)^+ $ where $(\xi_i \distequal{} \tilde{X}(l_i))_{i \geq 1}$ are independent conditioned on the stick length sequence $(l_i)_{i \geq 1}$ with $\sum_i l_i = 1$. Furthermore, we use $Y^* =\mathbbm{1}\{ M \geq c  \}$ to indicate whether the supreme of $\tilde{X}$ on [0,1] exceeds $c$, while $Y_m = \mathbbm{1}\{ M_m \delequal{} \sum_{i = 1}^m(\xi_i)^+ \geq c \}$ as its counterpart for the $m-$step SBA estimation. To bound $\mathbb{P}(Y^* \neq Y_m)$, fix some $\epsilon \in (0,c)$ and notice that if $\{Y^* \neq Y_m\}$ occurs, then so does at least one of the following two events: (a) $|M - M_m| \geq \epsilon$; (b) $M \in [c - \epsilon, c+\epsilon]$. For the former, the geometric convergence rate of SBA gives an effective bound (in particular, see Lemma 10 in \cite{cazares2018geometrically}). Now the proof hinges on bounding the latter, which boils down to analyzing probability of the form $\mathbb{P}(M_j \in [c,c+\delta])$ for $c,\delta>0$. 

To this end, recall that $M_j \distequal{} (\xi_1)^+ * (\xi_2)^+ * \cdots * (\xi_j)^+$ where $*$ is the convolution operator, and observe the following facts. First, convolution operation preserves the smoothness of any distribution involved; for instance $X*Y$ is continuous if either $X$ or $Y$ is continuous, and $X*Y$ has a density bounded by a constant $K$ if so does either $X$ or $Y$. Besides, given $(l_i)_{i \geq 1}$, the law of $\xi_i \distequal{} \tilde{X}(l_i)$ satisfies Assumption \ref{assumption_Xdist}, and as long as $\xi_i > 0$, we will have $\xi_i = (\xi_i)^+$ so the same smoothness property can be passed to $(\xi_i)^+$, hence $M_j$. To apply these facts, fix some $h>0$ as a threshold value and note that: if there exists $i = 1,\cdots,j$ such that $l_i > 0$ and $\xi_i > 0$, then by further conditioning on this event we can use Assumption \ref{assumption_Xdist} to provide a bound with $t \geq h$; otherwise, the supreme $M$ is equal to sum of increments only on sticks shorter than $h$, the total length of which is less than $jh$. If $h$ (and hence $jh$) is indeed a small value, then it is unlikely that L\'evy process $\tilde{X}$ reached the barrier $c$ within such a short period of time $jh$ (again, see Lemma 10 in \cite{cazares2018geometrically}), let along crossing the barrier $c$ and staying in $[c,c+\delta]$. By carefully choosing $j$ and $h$, we can establish a useful upper bound for $\mathbb{P}(M \in [c,c+\delta])$, and eventually for $\mathbb{E}Z^2_n(\zeta_k)$.

The argument for \eqref{proofGoal2} will be analogous, except that we need to notice the following fact: since $Y^*_n \geq Y_{n,m}$, for $Y^*_n \neq Y_{n,m}$ to occur we need to at least ensure $Y^*_n = 1$, which is equivalent to the condition $\{\bar{X}_n \in A\}$. Combining this with Hölder's inequality when using Result \ref{resultDebias} and the bound $\mathbb{E}Z^2_n(\zeta_k) \leq Ck$ above, we will have
$\mathbb{E}Z^2_{n,2} \leq C_1\sqrt{\mathbb{P}( \bar{X}_n \in A\cap(B^\gamma)^c)}$
where $C_1 < \infty$ is some constant. Thus, by picking $\gamma$ small enough and invoking large deviation principles (Result \ref{resultLevyLD}) again, we then have \eqref{proofGoal2} and conclude the proof.

\section{SIMULATION EXPERIMENTS}
\label{section_experiments}

In this section, we apply the proposed importance sampling strategy in Algorithm \ref{algoISnoARA} to the following setting and use numerical experiments to demonstrate: (1) the performance of the importance sampling estimator under different scaling factor $n$ and different tail distributions; (2) the efficiency of the algorithm when compared to crude Monte-Carlo methods. 

Consider a L\'evy process
$X(t) = B(t) + \sum_{i=1}^{N(t)}W_i $
where $B(t)$ is the standard Brownian motion, $N$ is a Poisson process with arrival rate $\lambda = 0.1$, and $\{W_i\}_{i \geq 1}$ is a sequence of i.i.d. samples from Pareto distribution with
$$\mathbb{P}( W_1 > x ) = \frac{1}{\max\{x,1\}^\alpha}  $$
where the tail index $\alpha > 1$. For each $n \geq 1$, define the scaled process $X_n(t) = \frac{X(nt)}{n}$, and we are interested in the probability of the event $A_n = \{X_n \in A\}$ where
$$A = \{ \xi \in \mathbb{D}: \sup_{t \in [0,1]} \xi(t) - \xi(t-) < b, \ \sup_{t \in [0,1]}\xi(t) \geq a  \} $$
with $a = 2, b = 1.15$. As stressed above, we aim to showcase the performance of the importance sampling estimator under different $n$ and $\alpha$. Specifically, in our experiments we use $\alpha = 1.45, 1.6, 1.75$, and $n = 1000, 2000, \cdots, 10000$. To quantify the efficiency of an estimator, we report the \textit{relative error}: the ratio between the standard deviation estimated by all samples of the estimator and the estimated mean.

In terms of the specifications the of experiments, for the importance sampling estimator we use $\gamma = 0.2, w = 0.05, \rho = 0.95$ (and note that $l^* = 2$ in this case). For each $\alpha \in \{1.45,1.6,1.75\}$ and $n \in \{1000,2000,\cdots,10000\}$ we obtain 500,000 independent samples. We also compare the efficiency of the importance sampling estimator against the crude Monte-Carlo methods. As for the number of simulation trials we run for each $\alpha$ and $n$ for crude Monte-Carlo estimator, we ensure that at least $64/\hat{p}_{\alpha,n}$ samples are obtained where $\hat{p}_{\alpha,n}$ is the estimated value of $\mathbb{P}(A_n)$ using Algorithm \ref{algoISnoARA} as described above

Highlights of the experiment results are summarized in Table \ref{tab: IS relative error} and Figure \ref{fig: reinsurance}. In Table \ref{tab: IS relative error}, we see that, for a fixed $\alpha$, the relative error of the importance sampling estimator stays at a constant level regardless of how large $n$ is. This is as expected in view of the strong efficiency of the estimator established in Theorem \ref{  thmStrongEfficiency }. Therefore, in a rare-event simulation setting where the goal is to achieve a certain level of standard error, the number of samples required for Algorithm \ref{algoISnoARA} is upper bounded and does not scale with $n$.

Figure \ref{fig: reinsurance} illustrates the relative error of both methods, thus demonstrating the benefit of the proposed importance sampling strategy for rare-event simulation: for crude Monte-Carlo scheme the relative error scales polynomialy with $n$ (to be precise, roughly $\mathcal{O}(n^{l^*(\alpha - 1)})$), which contrasts the nearly constant relative error of the Algorithm \ref{algoISnoARA}. Now recall that: for both method, the expected cost to generate one sample is $\mathcal{O}(n)$ (which is the order of expected number of jumps to be simulated), so the proposed importance sampling method always outperforms crude Monte-Carlo scheme in terms of computational cost, assuming again the goal is to achieve a fixed level of standard error. Therefore, when the algorithm is properly parametrized, the larger the scaling factor $n$ is (namely, the rarer the events $A_n$ are) the more efficient and favorable our importance sampling estimator is when compared against vanilla Monte-Carlo approach. 
\begin{figure}[htb]
{
\centering
\includegraphics[width=0.7\textwidth]{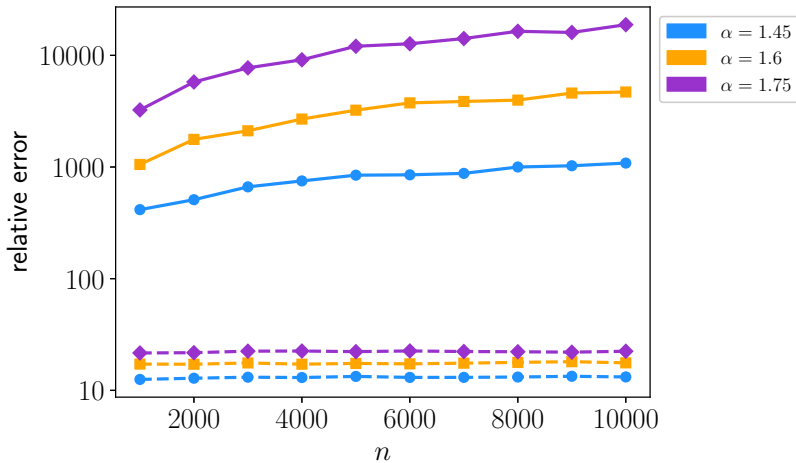}
\caption{Comparison of relative errors between the proposed importance sampling estimator and crude Monte Carlo estimator. Solid lines: Crude Monte-Carlo estimator; Dashed lines: Importance-sampling estimator. \label{fig: reinsurance}}
}
\end{figure}

\begin{table}[htb]
\centering
\caption{Rare-event simulation results using Algorithm \ref{algoISnoARA}. First row: estimated probability of $\mathbb{P}(A_n)$; Second row: the relative error.}\label{tab: IS relative error}
\begin{tabular}{c c c c c c} 
 \hline
 n & 2000 & 4000 & 6000 & 8000 & 10000 \\
 \hline
 \multirow{2}{4em}{$\alpha = 1.45$} & $3.53\times 10^{-6}$ & $1.85\times 10^{-6}$ & $1.28\times 10^{-6}$ & $9.76\times 10^{-7}$ & $7.96\times 10^{-7}$ \\
 & $12.84$ & $13.02$ & $13.06$ & $13.16$ & $13.19$ \\
 \hline
 \multirow{2}{4em}{$\alpha = 1.6$} & $3.34\times 10^{-7}$ & $1.45\times 10^{-7}$ & $8.84\times 10^{-8}$ & $5.89\times 10^{-8}$ & $4.60\times 10^{-8}$ \\
 & 17.13 & 17.16 & 17.26 & 17.80 & 17.63 \\
 \hline
\multirow{2}{4em}{$\alpha = 1.75$} & $3.46\times 10^{-8}$ & $1.14\times 10^{-8}$ & $6.21\times 10^{-9}$ & $4.17\times 10^{-9}$ & $2.92\times 10^{-9}$ \\
 & 21.74 & 22.50 & 22.53 & 22.16 & 22.40 \\
 \hline
\end{tabular}
\end{table}

\section{CONCLUSIONS}
We have proposed a strongly efficient importance sampling algorithm for rare-event simulation of L\'evy processes with heavy-tailed jump measures, where the events are triggered by multiple jumps and the L\'evy processes possess infinite activities. The algorithm outperforms crude Monte-Carlo method in terms of computational cost. In our future works, we aim to extend the current framework to more abstract events and a more general class of processes.

\footnotesize


\bibliographystyle{unsrt}
\bibliography{bib_appendix}

\pagebreak
\appendix
\section{Proof of Theorem \ref{  thmStrongEfficiency }} \label{appendixProof1}

\begin{proof}

Let us first fix some notations and choice of parameters. Let $\alpha,\theta$ be the values stated in Assumption \ref{assumption_Xdist}. Choose
\begin{align}
    \alpha_3 \in (0,\frac{\theta}{\alpha}),\ \alpha_4 \in (0, \frac{\theta}{2\alpha}).\label{proofChooseAlpha34}
\end{align}
 Next, fix 
 \begin{align}
     \alpha_2 \in (0, (\alpha_3/2)\wedge 1),\ \alpha_1 \in (0,\frac{\theta}{\alpha\alpha_2}). \label{proofChoose12}
 \end{align}
 Then, fix 
 \begin{align}
     \delta \in (1/\sqrt{2},1) \label{proofChooseDelta}
 \end{align}
 Since we require $\alpha_2$ to be strictly less than $1$, there exists some integer $\Bar{m}$ such that for any $m \geq \Bar{m}$, we have
 \begin{align}
     \delta^{m\alpha_2} - \delta^{m} \geq \frac{\delta^{m\alpha_2}}{2}. \label{proofChooseMBar}
 \end{align}
 Fix such $\bar{m}$. Based on all previous choices, we are able to choose $\rho_0,\rho \in (0,1)$ such that
\begin{align}
    \rho_0 & > \max\{ \delta^\alpha, \frac{1}{\delta\sqrt{2}}, \delta^{\theta\alpha_2 - \alpha\alpha_1}, \delta^{\theta - \alpha\alpha_3}, \delta^{-\alpha_2 + \frac{\alpha_3}{2}} \}, \label{proofchooseRho0} \\
    \rho & > \sqrt{\rho_0}, \label{proofchooseRho}
\end{align}
 Lastly, fix
 \begin{align}
     \gamma \in \Big(0, \frac{ a - (l^* - 1)b }{3l^*}\wedge b\Big) \label{proofChooseGamma}
 \end{align}
such that $(a - (l^* - 1)b)/\gamma$ is not an integer.
 The analysis of the efficiency of the importance sampling estimator starts by changing the measure from $\mathbb{Q}$ back to $\mathbb{P}$, and using the exact form of $d\mathbb{Q}/d\mathbb{P}$ (see \eqref{defQ}):
\begin{align}
   \nonumber \mathbb{E}^{\mathbb{Q}}[L^2_n] & = \int Z^2_{n}(J_n)\mathbbm{1}_{E}(J_n / n) \frac{d\mathbb{P}}{d\mathbb{Q}}\frac{d\mathbb{P}}{d\mathbb{Q}}d\mathbb{Q} = \int Z^2_{n}(J_n)\mathbbm{1}_{E}(J_n/n) \frac{d\mathbb{P}}{d\mathbb{Q}}d\mathbb{P} \\ \nonumber
    & = \int Z^2_{n}(J_n)\mathbbm{1}_{E \cap B^\gamma_n}(J_n/n)\frac{d\mathbb{P}}{d\mathbb{Q}}d\mathbb{P} + \int Z^2_{n}(J_n)\mathbbm{1}_{E \cap (B^\gamma_n)^c}(J_n/n)\frac{d\mathbb{P}}{d\mathbb{Q}}d\mathbb{P} \\
    &\leq  \frac{\mathbb{P}(B^\gamma_n)}{1-w}\mathbb{E}[Z_{n,1}^2] + \frac{1}{w}\mathbb{E}[Z_{n,2}^2], \label{proofFirstIneq} 
\end{align}
where $Z_{n,1} = Z_{n}(J_n)\mathbbm{1}_{E\cap B^\gamma}(J_n / n), Z_{n,2} = Z_{n}(J_n)\mathbbm{1}_{E \cap (B^\gamma)^c}(J_n/n)$. Using Result \ref{resultLevyLD}, we have $\mathbb{P}(B^\gamma_n) = \mathcal{O}(\mathbb{P}(A_n))$ as both $B^\gamma$ and $A$ are bounded away from $\mathbb{D}_{<l^*}$. The strong efficiency then follows immediately if we could establish
\begin{align}
    \mathbb{E}Z^2_{n,1} = \mathcal{O}(\mathbb{P}(A_n)); \label{proofGoal1} \\
    \mathbb{E}Z^2_{n,2} = \mathcal{O}(\mathbb{P}^2(A_n)). \label{proofGoal2}
\end{align}{}

The first half of the proof is devoted to establishing \eqref{proofGoal1}. By conditioning on $J_n$, we have
\begin{align}
    \mathbb{E}Z^2_{n,1} & = \mathbb{E}\Big[ \mathbb{E}[ Z^2_n(J_n)\mathbbm{1}_{E\cap B^\gamma}(J_n/n)\ |\ J_n] \Big] \nonumber \\
    & \leq \mathbb{E}\Big[ \mathbb{E}[ Z^2_n(J_n)\mathbbm{1}_{B^\gamma_n}(J_n / n)\ |\ J_n] \Big]. \label{proofZ1starting}
\end{align}{}
From now on, let $\lambda_n = n\nu[n\gamma,\infty)$. Fix the integer $k \geq l^*$. The proof proceeds by further conditioning on the event that there are $k$ jumps in $J_n$: in this case, $J_n$ is a compound Poisson process with arrival rate $\frac{\lambda_n}{n} = \nu[n\gamma,\infty)$ and jumps sampled from the distribution $\nu^{\text{normalized}}_n(\cdot) = \frac{\nu(\cdot \cap [n\gamma,\infty))}{\nu[n\gamma,\infty)}$, and we have the following equivalence (in distribution)
\begin{align}
    J_n \distequal{} \zeta_k \delequal{} \sum_{i = 1}^k z_i \mathbbm{1}_{[u_i,n]} \label{proofZetaRepresentation}
\end{align}{}
where $z_1,\cdots,z_k$ are i.i.d. samples from $\nu^{\text{normalized}}_n$, and $u_1 \leq u_2 \leq \cdots \leq u_k$ are order statistics of $k$ i.i.d. samples from $Unif(0,n)$. Note that
\begin{align}
    \mathbb{E}[ Z^2_{n}(J_n)  |\ J_n = \zeta_k ] & = \mathbb{E}Z^2_n(\zeta_k) \nonumber \\
    & \leq \sum_{ m \geq 0} \frac{ \mathbb{P}( Y_{n, m-1 }(\zeta_k) \neq Y^*_n(\zeta_k)  ) }{ \mathbb{P}(\tau \geq m) }, \label{proofIneqUnbiasEstL2withKjumps} 
\end{align}{}
where the inequality follows from Result \ref{resultSBA}, the construction of the unbiased estimator $Z_n$, and the fact that $Y^*_n$ or $Y_{n,m}$ only take value in $\{0,1\}$.

For any $m=0,1,2,\cdots$, consider the following decomposition of events based on $u_1$, the arrival time of the first large jump: (see Section \ref{sectionIS} for definitions of $Y^*_n, Y_{n,m}$)
\begin{align}
    \mathbb{P}( Y_{n, m} (\zeta_k) \neq Y^*_n(\zeta_k)  ) & = \mathbb{P}( Y_{n, m}(\zeta_k) \neq Y^*_n(\zeta_k),\ u_1 < n\delta^{m\alpha_1}  )\nonumber \\ & + \mathbb{P}( Y_{n, m}(\zeta_k) \neq Y^*_n(\zeta_k),\ u_1 \geq n\delta^{m\alpha_1}  )\nonumber \\
    & \leq \mathbb{P}(u_1 < n\delta^{m\alpha_1}) + \mathbb{P}( Y_{n, m}(\zeta_k) \neq Y^*_n(\zeta_k),\ u_1 \geq n\delta^{m\alpha_1}  ). \nonumber
\end{align}{}
The distribution of $u_1,\cdots,u_k$ implies
\begin{align}
    \mathbb{P}(u_1 < n\delta^{m\alpha_1}) \leq \sum_{i = 1}^k \mathbb{P}( Unif(0,n) < n\delta^{m\alpha_1}) = k\delta^{m\alpha_1} = k\cdot o(\rho^m) \label{proofFirstArrivalTimeBound}
\end{align}{}
due to \eqref{proofchooseRho}. As for the other term, for any $m \geq 0, i = 0,1,\cdots,k$, let us define
\begin{align*}
    Y_{n,m}^{(i)}(\zeta_k) & \delequal \mathbbm{1}\{\widetilde{M}^{(i)}_m(\zeta_k) + \sum_{j = 1}^i z_j  \geq na   \}, \\
    Y^{*,(i)}_n(\zeta_k) & \delequal  \mathbbm{1}\{\widetilde{M}^{(i)} + \sum_{j = 1}^i z_j  \geq na   \}.
\end{align*}
With the definitions above, we can rewrite $Y_n^*$ and $Y_{n,m}$ as 
\begin{align*}
    Y_n^*(\zeta_k) & = \max_{i = 0,1,\cdots,k} Y^{*,(i)}_{n}(\zeta_k) \\
    Y_{n,m}(\zeta_k) & = \max_{i = 0,1,\cdots,k} Y^{(i)}_{n,m}(\zeta_k)
\end{align*}{}

In other words, $Y^{*,(i)}_n(\zeta_k)$ detects whether the real sample path of the process $\widetilde{X} + \zeta_k$ has reached $na$ on $(u_{i},u_{i+1}]$, while $Y_{n,m}^{(i)}(\zeta_k)$ is a similarly defined indicator function but for the approximation at level $m$. Now one can see that
\begin{align}
    \mathbb{P}( Y_{n, m}(\zeta_k) \neq Y^*_n(\zeta_k),\ u_1 \geq n\delta^{m\alpha_1}  ) & \leq \sum_{i = 0}^k\mathbb{P}\Big(Y_{n,m}^{(i)}(\zeta_k) \neq  Y^{*,(i)}_n(\zeta_k), \ u_1 \geq n\delta^{m\alpha_1} \Big). \label{proofIneqFixedI}
\end{align}{}
To establish an upper bound for \eqref{proofIneqFixedI}, we consider two cases separately: $i = 0$; or $i \geq  1$. For the case where $i \geq 1$, let us observe the following facts. First, recall that
\begin{align*}
    \widetilde{M}^{(i)} & = \sum_{l = 0}^{i - 1}\sum_{j \geq 1}\xi^{(l)}_j + \sum_{j \geq 1}(\xi^{(i)}_j)^+ \\
    \widetilde{M}^{(i)}_m & = \sum_{l = 0}^{i-1}\sum_{j \geq 1]}\xi^{(l)}_j + \sum_{j = 1}^{\ceil{\log(n^2} + m}(\xi^{(i)}_j)^+
\end{align*}{}
where the coupling between $(\xi^{(i)}_j)_{i,j}$ are described in Section \ref{sectionIS}. Due to the equality $| \widetilde{M}^{(i)} -  \widetilde{M}^{(i)}_m| = \sum_{j>\ceil{\log(n^2} + m}(\xi^{(i)}_j)^+$, we know that: if both $\{Y_{n, m}(\zeta_k) \neq Y^*_n(\zeta_k)\}$ and $\{ \sum_{j>\ceil{\log(n^2} + m}(\xi^{(i)}_j)^+ < \delta^m/\sqrt{n} \}$ occur, then event
$$\{ \widetilde{M}^{(i)} + \zeta_k(u_i) \in [na, na + \frac{\delta^m}{\sqrt{n}}] \} $$
must occur. Now, zooming in on event
$\{ \widetilde{M}^{(i)}_m + \zeta_k(u_i) \in [na - \delta^{m}/\sqrt{n}, na+\delta^m/\sqrt{n}]  \}$, since we know that
\begin{align*}
    \widetilde{M}^{(i)}_m \distequal{} \widetilde{X}^\prime_{u_i} + \sum_{j = 1 }^{ \ceil{\log_2(n^2)} + m } ( \xi^{ (i),\kappa_{n,m} }_j)^+
\end{align*}{}
where $\widetilde{X}^\prime_{u_i}$ is an independent copy of $\widetilde{X}_{u_i}$ and is independent of $( \xi^{ (i),\kappa_{n,m} }_j)_{j \geq 1}$, we can decide the location of $\widetilde{M}^{(i)}_m + \zeta_k(u_i)$ by first considering the value of $\widetilde{X}^\prime_{u_i}$ (thus deciding $\widetilde{X}^\prime_{u_i} + \zeta_k(u_i)$ ), and then considering $\sum_{j = 1 }^{ \ceil{\log_2(n^2)} + m } ( \xi^{ (i),\kappa_{n,m} }_j)^+$. This convolutional structure gives the following bound:
\begin{align*}
    & \mathbb{P}( \widetilde{M}^{(i)}_m + \zeta_k(u_i) \in [na - \delta^{m}/\sqrt{n}, na+\delta^m/\sqrt{n}]  ) \\
    \leq & \mathbb{P}( \widetilde{X}^\prime_{u_i} + \zeta_k(u_i) \in [na - \delta^{m\alpha_2}, na + \delta^{m\alpha_2}]  ) \\ 
    + &  \int_{ \mathbb{R}\symbol{92}[na - \delta^{m\alpha_2}, na + \delta^{m\alpha_2}]  } \mathbb{P}\Big( \sum_{j = 1 }^{ \ceil{\log_2(n^2)} + m } ( \xi^{ (i) }_j)^+ \in [ na - x - \frac{\delta^m}{\sqrt{n}},na - x + \frac{\delta^m}{\sqrt{n}} ]  \Big) \mathbb{P}( \widetilde{X}^\prime_{u_i} + \zeta_k(u_i) = dx  ) \\
    = &  \mathbb{P}( \widetilde{X}^\prime_{u_i} + \zeta_k(u_i) \in [na - \delta^{m\alpha_2}, na + \delta^{m\alpha_2}]  ) \\
    + & \int_{ (-\infty,na - \delta^{m\alpha_2})  } \mathbb{P}\Big( \sum_{j = 1 }^{ \ceil{\log_2(n^2)} + m } ( \xi^{ (i) }_j)^+ \in [ na - x - \frac{\delta^m}{\sqrt{n}},na - x + \frac{\delta^m}{\sqrt{n}} ]  \Big) \mathbb{P}( \widetilde{X}^\prime_{u_i} + \zeta_k(u_i) = dx  ).
\end{align*}{}
Overall, the observations above allow us to perform the following decomposition of events:
\begin{align}
    & \mathbb{P}\Big(Y_{n,m}^{(i)}(\zeta_k) \neq  Y^{*,(i)}_n(\zeta_k), \ u_1 \geq n\delta^{m\alpha_1} \Big) \label{proofCrucialBound} \\
\leq & \mathbb{P}\Big( \sum_{j > \ceil{\log_2(n^2)} + m} (\xi_{j}^{(i)} )^+ >  \frac{\delta^m}{\sqrt{n}} \Big) \label{proofTailOfSBA} \\
    + & \mathbb{P}\Big( \widetilde{X}(u_i) + \zeta_k(u_{i}) \in [na - \delta^{m\alpha_2},na + \delta^{m\alpha_2}],\ u_1 \geq n\delta^{m\alpha_1} \Big) \label{proofLocationOfX} \\
    + & \int_{ (-\infty,na - \delta^{m\alpha_2})  } \mathbb{P}\Big( \sum_{j = 1 }^{ \ceil{\log_2(n^2)} + m } ( \xi^{ (i) }_j)^+ \in [ na - x - \frac{\delta^m}{\sqrt{n}},na - x + \frac{\delta^m}{\sqrt{n}} ]  \Big) \mathbb{P}( \widetilde{X}_{u_i} + \zeta_k(u_i) = dx  ) \label{proofSBAConvolutionBound}
\end{align}
Now we focus on providing a bound for each term involved in the inequality above. For term \eqref{proofTailOfSBA}, using Result \ref{resultSBA} one can see the existence of some constant $C_X$ (which only depends on the generating triplet of $X$) such that: when conditioning on the length of the remaining stick $\sum_{j > \ceil{\log_2(n^2)} + md}l_j = t$, we have
\begin{align}
   & \mathbb{P}\Big( \sum_{j > \ceil{\log_2(n^2)} + m} (\xi_{j}^{(i)})^+ >  \frac{\delta^m}{3\sqrt{n}} \ | \ \sum_{j > \ceil{\log_2(n^2)} + m}l^{(i)}_j = t \Big) \nonumber \\
   \leq & 3C_X \frac{ \sqrt{n}( t + \sqrt{t}) }{\delta^m}. \label{proofFirstUseSBAMomentBound}
\end{align}{}
Therefore, unconditionally, due to $\sum_{j > \ceil{\log_2(n^2)} + m}l^{(i)}_j \distequal{} (u_{i+1} - u_i)\prod_{j=1}^{\ceil{\log_2(n^2)} + m}U_j$ where $(U_j)_{j \geq 1}$ is a sequence of i.i.d. samples of $Unif(0,1)$, the bound $u_{i+1} - u_i \leq n$, and the fact (due to Jensen's) that $\mathbb{E}\sqrt{W} \leq \sqrt{\mathbb{E}W}$ for any nonnegative random variable $W$, we have
\begin{align}
    &  \mathbb{P}\Big( \sum_{j > \ceil{\log_2(n^2)} + m} (\xi_{j}^{(i)})^+ >  \frac{\delta^m}{3\sqrt{n}} \Big) \leq 3C_X \frac{\sqrt{n}( \frac{n}{n^2 2^{m}} + \sqrt{\frac{n}{n^2 2^{m}}} )}{\delta^m}
    \leq \frac{ 6C_X }{(\sqrt{2} \delta)^m} = o(\rho^m) \label{proofEndTailofSBA}
\end{align}{}
due to our choice of $\rho$ in \eqref{proofchooseRho}.

As for term \eqref{proofLocationOfX}, representation of the step function $\zeta_k$ in \eqref{proofZetaRepresentation} gives $\zeta_k(u_i) = z_1 + \cdots + z_i$, and we have
\begin{align}
    & \mathbb{P}\Big( \widetilde{X}(u_i) + \zeta_k(u_{i}) \in [na - \delta^{m\alpha_2},na + \delta^{m\alpha_2}],\ u_1 \geq n\delta^{m\alpha_1} \Big) \nonumber \\
    \leq & \int_\mathbb{R} \mathbb{P}\Big( \widetilde{X}(u_i)\in [na - x - \delta^{m\alpha_2},na - x + \delta^{m\alpha_2}] \ | \ u_1 \geq n\delta^{m \alpha_1}  \Big) \mathbb{P}(\sum_{l = 1}^i z_l = dx) \nonumber \\
    = & \int_\mathbb{R} \int_{[n\delta^{m\alpha_1} ,n]^2} \mathbb{P}\Big( \widetilde{X}(u_i)(t)\in [na - x - \delta^{m\alpha_2},na - x + \delta^{m\alpha_2}]\Big)\mathbb{P}(U_1 = ds,U_i = dt) \mathbb{P}(\sum_{l = 1}^i z_l = dx) \nonumber
\end{align}{}
Due to Assumption \ref{assumption_Xdist}, for any $y \in \mathbb{R}, t \geq n\delta^{m\alpha_1} \geq \delta^{m\alpha_1}$, we have
$$\mathbb{P}(\widetilde{X}(t) \in [y, y + 2\delta^{m\alpha_1}] )  = \mathbb{P}(X^{\leq n\gamma}(t) \in [y, y + 2\delta^{m\alpha_1}] ) \leq \frac{2^\theta C (\delta^{\theta \alpha_2})^m }{ (\delta^{\alpha \alpha_1})^m }$$
where the constants $C,\alpha,\theta$ are given by Assumption \ref{assumption_Xdist}. By plugging this bound back into the integral above, we have
\begin{align}
     \mathbb{P}\Big( \widetilde{X}(u_i) + \zeta_k(u_{i}) \in [na - \delta^{m\alpha_2},na + \delta^{m\alpha_2}],\ u_1 \geq n\delta^{m\alpha_1} \Big) \leq 2^\theta C\frac{ (\delta^{\theta \alpha_2})^m }{ (\delta^{\alpha \alpha_1})^m } = o(\rho^m) \label{proofEndLocationOfX}
\end{align}{}
due to our choice of $\rho$ in \eqref{proofchooseRho0}\eqref{proofchooseRho}. 

Lastly, for term \eqref{proofSBAConvolutionBound}, we focus on bounding the integrand when $m \geq \Bar{m}$ where the $\bar{m}$ is the fixed integer in \eqref{proofChooseMBar}. First of all, let us focus on the domain of $x$ of the integral in \eqref{proofSBAConvolutionBound}: due to \eqref{proofChooseMbar}, for any $x < na - \delta^{m\alpha_2}$, we have 
$$na - x - \frac{\delta^m}{\sqrt{n}} \geq \delta^{m\alpha_2} - \delta^m \geq \frac{\delta^{m\alpha_2}}{2}.$$ 
Therefore, when bounding the integrand in \eqref{proofSBAConvolutionBound} (for $m \geq \bar{m}$), it suffices to consider probability of the form
$$\mathbb{P}\Big( \sum_{j = 1}^{ \ceil{\log_2(n^2)} + m }(\xi_{l_j})^+ \in [y, y+\frac{2\delta^m}{\sqrt{n}}] \Big) $$
where $y \geq \delta^{m\alpha_2}/2$. To this end, our first step is to condition on stick lengths $(l^{(i)}_1,l^{(i)}_2,\cdots)$: For an arbitrary sequence of positive real numbers $(\Tilde{t}_1,\Tilde{t}_2,\cdots,\Tilde{t}_{m +  \ceil{\log_2(n^2)}})$, denote its descending reordering as $(t_1,t_2,\cdots,t_{m +  \ceil{\log_2(n^2)}})$. By conditioning on $\{ l^{(i)}_j = \Tilde{t_j}: j = 1,2,\cdots, \ceil{\log_2(n^2)} + m \}$, we have
$$\sum_{j = 1}^{ \ceil{\log_2(n^2)} + m }(\xi_{j}^{(i)})^+ \distequal{} \sum_{j = 1}^{ \ceil{\log_2(n^2)} + m }(\widetilde{X}^\prime_{t_j})^+$$
where, for each $j$, $\widetilde{X}^\prime_{t_j}$ is an independent copy of $\widetilde{X}_{t_j}$. The idea of the proof is to consider that, for the sequence $(\widetilde{X}^\prime_{t_1},\widetilde{X}^\prime_{t_2},\cdots)$, when is the first time we get $\widetilde{X}^\prime_{t_j} > 0$. Specifically, let us set a threshold 
$$\eta = \delta^{m\alpha_3}/n^{\alpha_4},$$
and for the sequence $(t_1,t_2,\cdots)$, define the index $$J = \#\{j = 1,2,\cdots,  \ceil{\log_2(n^2)} + m:\ t_j \geq \eta\},$$
Intuitively speaking, we consider $t_j$ as a \textit{long stick} if $j \leq J$ (as we have $t_j \geq \eta$), and view it as a \textit{short stick} otherwise. By considering when is the first time we get $\widetilde{X}^\prime_{t_j} > 0$, we yield the following decomposition of events:
\begin{align}
    & \mathbb{P}\Big( \sum_{j = 1}^{ \ceil{\log_2(n^2)} + m }(\xi^{(i)}_{j})^+ \in [y, y+\frac{2\delta^m}{\sqrt{n}}] \Big) \nonumber \\
    = & \sum_{j = 1}^J \mathbb{P}\Big( \widetilde{X}^\prime_{t_l} \leq 0\ \forall l < j;\ \widetilde{X}^\prime_{t_j} > 0; \ \sum_{l = j}^{ \ceil{\log_2(n^2)} + m }(\widetilde{X}^\prime_{t_l} )^+ \in [y, y+\frac{2\delta^m}{\sqrt{n}}]   \Big) \label{proofSBAconvolutionLongStick} \\
    + & \mathbb{P}\Big( \widetilde{X}^\prime_{t_l} \leq 0\ \forall l\leq J; \ \sum_{l = J+1}^{ \ceil{\log_2(n^2)} + m }(\widetilde{X}^\prime_{t_l})^+ \in [y, y+\frac{2\delta^m}{\sqrt{n}}]  \Big). \label{proofSBAconvolutionShortStick}
\end{align}{}
Again, we tackle the two terms respectively. For term \eqref{proofSBAconvolutionLongStick}, consider a fixed $j = 1,2,\cdots,J$ and note that
\begin{align}
     & \mathbb{P}\Big( \widetilde{X}^\prime_{t_l} \leq 0\ \forall l < j;\ \widetilde{X}^\prime_{t_j} > 0; \ \sum_{l = j}^{ \ceil{\log_2(n^2)} + m }(\widetilde{X}^\prime_{t_l} )^+ \in [y, y+\frac{2\delta^m}{\sqrt{n}}]   \Big) \nonumber \\
     \leq & \mathbb{P}\Big( \widetilde{X}^\prime_{t_j} > 0; \ \sum_{l = j}^{ \ceil{\log_2(n^2)} + m }(\widetilde{X}^\prime_{t_l} )^+ \in [y, y+\frac{2\delta^m}{\sqrt{n}}]   \ \Big) \nonumber \\
     = & \int_{\mathbb{R}}\mathbb{P}\Big(  \widetilde{X}^\prime_{t_j} \in \big(0 \vee (y - x), 0 \vee (y-x + \frac{2\delta^m}{\sqrt{n}}) \big) \Big) \mathbb{P}\Big( \sum_{l = j+1}^{ \ceil{\log_2(n^2)} + m}( \widetilde{X}^\prime_{t_l})^+ = dx \Big) \nonumber \\
     \leq & 2^\theta C\frac{n^{\alpha \alpha_4}(\delta^\theta)^m }{n^{\theta/2}(\delta^{\alpha\alpha_3})^m} \ \ \ \text{due to Assumption \ref{assumption_Xdist} and $t_j \geq \eta$.} \nonumber
\end{align}{}
Since $J \leq m + \ceil{\log_2(n^2)}$, we have
\begin{align}
    & \eqref{proofSBAconvolutionLongStick} \leq 2^\theta C \frac{(m + \ceil{\log_2(n^2)})n^{\alpha \alpha_4} }{n^{\theta/2}} \cdot (\delta^{\theta - \alpha \alpha_3})^m \nonumber \\
    = & 2^\theta C \Big( \frac{m(\delta^{\theta - \alpha \alpha_3})^m }{n^{\frac{\theta}{2} - \alpha\alpha_4}} + \frac{\ceil{\log_2(n^2)}}{ n^{\frac{\theta}{2} - \alpha\alpha_4}  }m(\delta^{\theta - \alpha \alpha_3})^m \Big) \nonumber \\
    = & o(\rho^m) \label{proofEndSBAconvolutionLongStick}
\end{align}{}
due to our choice of $\rho$ in \eqref{proofchooseRho0}\eqref{proofchooseRho} and choice of $\alpha_4$ in \eqref{proofChooseAlpha34}.

On the other hand, for term \eqref{proofSBAconvolutionShortStick}, we have (recall that $y \geq \delta^{m\alpha_2}/2$)
\begin{align}
    & \mathbb{P}\Big( \widetilde{X}^\prime_{t_l} \leq 0\ \forall l\leq J; \ \sum_{l = J+1}^{ \ceil{\log_2(n^2)} + m }(\widetilde{X}^\prime_{t_l})^+ \in [y, y+\frac{2\delta^m}{\sqrt{n}}]  \Big) \nonumber \\
    \leq & \mathbb{P}\Big( \sum_{l = J+1}^{ \ceil{\log_2(n^2)} + m }(\widetilde{X}^\prime_{t_l})^+ \in [y, y+\frac{2\delta^m}{\sqrt{n}}]   \Big) \nonumber \\
    \leq &  \mathbb{P}\Big( \sum_{l = J+1}^{ \ceil{\log_2(n^2)} + m }(\widetilde{X}^\prime_{t_l})^+ \geq y  \Big) \nonumber \\
    \leq & \sum_{l = J+1}^{ \ceil{\log_2(n^2)} + m }\mathbb{P}\Big( \widetilde{X}^\prime_{t_l} \geq \frac{y}{L} \Big) \ \ \ \text{(Let $L  =  \ceil{\log_2(n^2)} + md - J$)} \nonumber \\
    \leq & \sum_{l = J+1}^{ \ceil{\log_2(n^2)} + m }C_X\frac{(t_l + \sqrt{t_l})L}{y} \ \ \ \text{due to Markov's inequality, $t_l \leq \eta$, and Result \ref{resultSBA}; $C_X$ is the constant in \eqref{proofFirstUseSBAMomentBound}} \nonumber \\
    \leq & C_X\frac{( \ceil{\log_2(n^2)} + m )^2( \sqrt{ \delta^{\alpha_3}/n^{\alpha_4}} +  \delta^{\alpha_3}/n^{\alpha_4} )}{\delta^{m\alpha_2}} \nonumber \\
    \leq & 2C_X\frac{ ( \ceil{\log_2(n^2)} + m )^2 \delta^{\alpha_3/2} }{ n^{\alpha_4/2}\delta^{m\alpha_2} } \nonumber \\
    \leq & 4C_X\Big( \frac{ \ceil{\log_2(n^2)}^2 }{n^{\alpha_4/2}}(\delta^{-\alpha_2 + \frac{\alpha_3}{2} })^m + m^2(\delta^{-\alpha_2 + \frac{\alpha_3}{2}})^m \Big)\ \ \ \text{due to $(u+v)^2\leq 2u^2 + 2v^2$} \nonumber \\
    = & o(\rho^m)\label{proofEndSBAconvolutionShortStick}
\end{align}
due to our choice of $\rho$ in \eqref{proofchooseRho0}\eqref{proofchooseRho}.

At this point, we collect previous results and return to the question of bounding the term \eqref{proofIneqFixedI}. The discussion above shows that for $i \geq 1$, the error term is $o(\rho^m)$. To see why the same bound holds for the case $i=0$, we only need to notice that, in this case, term \eqref{proofLocationOfX} can be removed from the inequality since $u_0 = 0$ and $X_0 = 0$ almost surely, and bounds on the other terms would still apply. Therefore, in \eqref{proofIneqUnbiasEstL2withKjumps}, we have
\begin{align}
    \mathbb{P}( Y_{n, m }(\zeta_k) \neq Y^*_n(\zeta_k)  ) = k\cdot o(\rho^m), \nonumber
\end{align}{}
which implies the existence of some constant $\Tilde{C} < \infty$ such that
\begin{align}
    \mathbb{E}[ Z^2_{n}(J_n)  |\ J_n = \zeta_k ] \leq \Tilde{C}k \label{proofEndZK}
\end{align}

To complete the proof, recall that $\lambda_n = n\nu[n\gamma,\infty)$, and the fact that the number of jumps of $J_n$ on $[0,n]$ follows a Poisson distribution with rate $\lambda_n$. Now we have
\begin{align}
    \mathbb{E}Z^2_{n,1} & \leq \sum_{k \geq l^*} \tilde{C}\cdot k \cdot e^{-\lambda_n}\frac{\lambda_n^k}{k!} \nonumber \\
    & \leq \tilde{C} \lambda_n^{l^*} \sum_{k \geq l^*}e^{-\lambda_n}\frac{\lambda_n^{k - l^*} }{(k-1)!} \nonumber \\
    & \leq \tilde{C} \lambda_n^{l^*} \sum_{k \geq l^*} e^{-\lambda_n}\frac{\lambda_n^{k - l^*} }{(k-l^*)!} \ \ \ \text{due to $l^* \geq 2$} \nonumber \\
    & = \tilde{C}\lambda^{l^*}_n = \tilde{C}\cdot( n\nu[n\gamma,\infty) )^{l^*}. \label{proof_BoundSquaredZ1}
\end{align}{}
Lastly, note that 
$$\Big( n\nu[n\gamma,\infty) \Big)^{l^*} = \mathcal{O}\Big( ( n\nu[n,\infty) )^{l^*} \Big) $$
due to the fact that $f^+(x) = \nu[x,\infty)$ is regularly varying with index $-\alpha_+$ where $\alpha_+ > 1$; on the other hand, note that
$$ \liminf_{n \rightarrow \infty} \frac{\mathbb{P}( \bar{X}_n \in A)}{( n\nu[n,\infty) )^{l^*}} \geq C^{l^*}_{\alpha_+}(A^o) > 0 $$
thanks to Assumption \ref{assumption_setA} and Result \ref{resultLevyLD}. Therefore, we now have
$$\mathbb{E}Z^2_{n,1} = \mathcal{O}(\mathbb{P}(A_n)),$$
which establishes \eqref{proofGoal1}, and concludes the first half of the proof. Note that: regardless of the value of $\gamma > 0$, claim \eqref{proofGoal1} always holds as long as the other parameters are properly chosen as specified at the beginning of the proof.

Moving on, we show that $\mathbb{E}Z^2_{n,2} = \mathcal{O}(\mathbb{P}^2(A_n))$. By definition of $Y_{n,m}$ and $Y^*_n$, we have $Y^*_n \geq Y_{n,m}$; then since they are all indicator functions, for $\{Y_{n,m} \neq Y^*_n\}$ to occur we need to at least have $Y^* = 1$, which implies $\Bar{X}_n \in A$. Therefore, using Result \ref{resultDebias}, when conditioning on the event $\{J_n = \zeta_k\}$ we have (for any $k = 0,1,\cdots,l^* - 1$)
\begin{align*}
    \mathbb{E}[Z^2_{n,2}|J_n = \zeta_k] & \leq \sum_{m \geq 1}\frac{ \mathbb{P}\Big( Y_{n,m-1}(\zeta_k) \neq Y^*_n(\zeta_k);\ \widetilde{X}+\zeta_k \in A_n\Big)   }{ \mathbb{P}(\tau \geq m)  } \\
    & \leq \sqrt{ \mathbb{P}\Big(\widetilde{X}+\zeta_k \in A_n \Big)  }\sum_{m \geq 1}\frac{ \sqrt{\mathbb{P}(Y^*_{n}(\zeta_k) \neq Y_{n,m}(\zeta_k))} }{ \mathbb{P}(\tau \geq m)} \\
    \leq & C_2 \sqrt{ \mathbb{P}\Big(\widetilde{X}+\zeta_k \in A_n\Big)  }
\end{align*}
due to Hölder's inequality, \eqref{proofEndZK}, and our choice of $\sqrt{\rho_0} < \rho$. Here $C_2$ is some finite constant. Therefore, unconditionally, we have (due to Jensen's inequality)
\begin{align*}
    \mathbb{E}Z^2_{n,2} & \leq C_2 \sqrt{ \mathbb{P}( \Bar{X}_n \in A,\ \Bar{X}_n \text{ has no more than $l^* - 1$ jumps larger than }\gamma ) } \\
    & = C_2 \sqrt{ \mathbb{P}( \Bar{X}_n \in A\cap (B^\gamma)^c) } \\
    & = \mathcal{O}( \mathbb{P}^2(A_n) )
\end{align*}
due to Result \ref{resultLevyLD} and our choice of $\gamma$ in \eqref{proofChooseGamma}. To see why, note that due to our choice of $\gamma$, if a step function $\zeta \in A\cap (B^\gamma)^c)$, then $\zeta$ must have at least $4l^*$ jumps, which implies
$$A\cap (B^\gamma)^c \cap \mathbb{D}_{<4l^*} = \emptyset.$$
Then from Result \ref{resultLevyLD}, one can see that (for some constant $C_3 < \infty,C_4>0$) such that for any $n$:
$$\sqrt{ \mathbb{P}( \Bar{X}_n \in A\cap (B^\gamma)^c) } \leq C_3 \big( n\nu[n,\infty) \big)^{2l^*}, \ \ \ \mathbb{P}^2(A_n) \geq C_4 \big( n\nu[n,\infty) \big)^{2l^*}.$$
This concludes the proof.
\end{proof}

\section{Proof of Theorem \ref{thmDistX}} \label{appendixProof2}

To prove Theorem \ref{thmDistX}, we first show the following Lemma.
\begin{lemma}
Given $\alpha \in (0,2),\gamma_0 >0,\epsilon \in (0, (2 - \alpha)/2)$, and a measure $\nu$ concentrated on $(0,\infty)$ such that $f(x) = \nu[x,\infty)$ is regularly varying at $0$ with index $-(\alpha+2\epsilon)$, there exists a constant $C < \infty$ such that for the L\'evy process $\{X_t:t\geq 0\}$ with generating triplet $(0,0,\nu|_{(-\gamma_0,\gamma_0)})$, we have
\begin{align*}
\norm{f_{X_t}}_\infty \leq \frac{C}{t^{1/\alpha} \wedge 1}\ \ \forall t > 0.
\end{align*}
where $\norm{\cdot}_\infty$ is the $L_\infty$ norm, and $f_{X_t}$ is the density function of the distribution of $X_t$.
\end{lemma}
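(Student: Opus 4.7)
The natural approach is Fourier inversion. Since $X$ has no Gaussian component and the Lévy measure $\nu|_{(-\gamma_0,\gamma_0)}$ is supported in a bounded set, the characteristic exponent
$$\psi(u) = \int_{(0,\gamma_0)} (e^{iux} - 1 - iux\mathbbm{1}_{|x|\leq 1})\,\nu(dx)$$
is well-defined, $\varphi_{X_t}(u) = e^{t\psi(u)}$, and $\Re\psi(u) = -\int_{(0,\gamma_0)}(1-\cos(ux))\,\nu(dx) \leq 0$. If I can show $\varphi_{X_t}$ is integrable for every $t>0$, then $f_{X_t}$ exists, is continuous, and satisfies the pointwise bound
$$\|f_{X_t}\|_\infty \leq \frac{1}{2\pi}\int_\mathbb{R} e^{t\Re\psi(u)}\,du.$$
So the whole problem reduces to getting a sufficiently strong lower bound on $-\Re\psi(u)$ for large $|u|$ and then bounding the integral.

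The heart of the argument is the lower bound. Using $1-\cos(y) \geq c_1 y^2$ for $|y|\leq 1$,
$$-\Re\psi(u) \geq c_1 u^2 \int_0^{1/|u|\wedge \gamma_0} x^2\,\nu(dx).$$
Since $f(x) = \nu[x,\infty)$ is regularly varying at $0$ with index $-(\alpha+2\epsilon)$ and $\alpha+2\epsilon < 2$, Karamata's theorem gives $\int_0^r x^2\,\nu(dx) \sim \frac{\alpha+2\epsilon}{2-(\alpha+2\epsilon)}\, r^2 f(r)$ as $r\downarrow 0$. Thus
$$-\Re\psi(u) \geq c_2\, |u|^{\alpha+2\epsilon}\, L(1/|u|)\quad \text{for } |u|\geq u_0,$$
where $L$ is slowly varying at $\infty$. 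Applying Potter's bound with slack $\epsilon$, I get $L(1/|u|) \geq |u|^{-\epsilon}$ for all sufficiently large $|u|$, so
$$-\Re\psi(u) \geq c_3\, |u|^{\alpha+\epsilon}\quad \text{for all } |u|\geq u_1,$$
for some $u_1 \geq u_0$. This is the crucial estimate (and in particular guarantees integrability of $\varphi_{X_t}$).

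Once I have this, splitting the integral at $|u|=u_1$ and using $|\varphi_{X_t}|\leq 1$ on the compact part gives
$$\int_\mathbb{R} e^{t\Re\psi(u)}\,du \;\leq\; 2u_1 + 2\int_{u_1}^\infty e^{-c_3 t u^{\alpha+\epsilon}}\,du \;\leq\; 2u_1 + C_4\, t^{-1/(\alpha+\epsilon)},$$
the last step by the substitution $v = (c_3 t)^{1/(\alpha+\epsilon)} u$. For $t\leq 1$, since $\alpha+\epsilon>\alpha$, we have $t^{-1/(\alpha+\epsilon)} \leq t^{-1/\alpha}$ and $2u_1 \leq 2u_1 t^{-1/\alpha}$, so the whole right-hand side is $\mathcal{O}(t^{-1/\alpha})$. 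For $t>1$, the second term is bounded by $C_4$ and the right-hand side is a constant. Combining the two regimes gives $\|f_{X_t}\|_\infty \leq C/(t^{1/\alpha}\wedge 1)$, as desired.

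The main obstacle is the lower bound step: getting a clean asymptotic lower bound on $-\Re\psi(u)$ from regular variation of the tail $f$. Once Karamata and Potter are applied carefully, the rest is a routine substitution in the exponential integral and a case split on $t\lessgtr 1$; the slight sacrifice of exponent from $\alpha+2\epsilon$ down to $\alpha+\epsilon$ (via Potter) is exactly what gives us room to absorb the slowly varying factor $L$ and still exceed the target exponent $\alpha$.
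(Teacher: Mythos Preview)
Your proposal is correct and follows the same overall scheme as the paper---Fourier inversion together with a lower bound on $-\Re\psi(u)$ for large $|u|$, then a substitution in the exponential integral and a split at $t=1$---but the way you obtain the key lower bound is genuinely different and more economical. The paper compares $\int_{(0,\gamma_0)}(1-\cos(zx))\,\nu(dx)$ directly against the same integral for a reference power-law measure $c\,x^{-(1+\alpha)}\,dx$: it partitions a fixed window $[\theta/|z|,M/|z|]$ into geometric subintervals, uses Potter bounds (this is where the extra index $\epsilon$ enters) to show $\nu$ has at least as much mass as the reference measure on each piece, and concludes $-\Re\psi(z)\geq \tfrac{cC_0}{2}\,|z|^{\alpha}$ for large $|z|$, hitting the target exponent $\alpha$ on the nose. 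You instead use $1-\cos y\geq c_1 y^2$ on $|y|\leq 1$, reduce to the truncated second moment $\int_0^{1/|u|}x^2\,\nu(dx)$, apply Karamata to relate this to $f(1/|u|)$, and then a single Potter bound to absorb the slowly varying factor, obtaining the stronger estimate $-\Re\psi(u)\geq c_3|u|^{\alpha+\epsilon}$. Your route sidesteps the paper's partition construction entirely; the only price is the extra observation $t^{-1/(\alpha+\epsilon)}\leq t^{-1/\alpha}$ for $t\leq 1$, which you make. One small notational slip: if $L$ is slowly varying at $\infty$ then the factor appearing in $f(1/|u|)$ is $L(|u|)$, not $L(1/|u|)$; this does not affect the argument.
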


\begin{proof}

We start by fixing some notations and parameters. Let
\begin{align}
    C_0 = \int_{0}^\infty (1 - \cos{y})\frac{dy}{y^{1 + \alpha}}.
\end{align}
Apparently, we have $C_0 \in (0,\infty)$. Besides, choose positive real numbers $\theta,\delta$ such that:
\begin{align}
    \frac{\theta^{2 - \alpha}}{2(2 - \alpha)}\leq \frac{C_0}{8}; \label{chooseTheta_Lcont} \\
    \frac{\delta}{\theta^\alpha}\leq \frac{C_0}{8}. \label{chooseDelta_Lcont}
\end{align}

The idea is to appeal to the inversion formula and show that $X_t$ has a bounded, continuous density, then we show how the $L_\infty$ norm of density of $X_t$ scales with $t$. To this end, we first apply Lévy-Khintchine formula: for any $t>0$, the characteristic function of $X_t$ is
\begin{align*}
    \varphi_{t}(z) = \exp\Big( t\int_{(0,\gamma_0)}\big(\exp(izx) -1 - izx\mathbbm{1}_{(0,1]}(x)\big)\nu(dx)\Big)\ \ \forall z\in\mathbb{R}.
\end{align*}

By considering the complex conjugate of $\int_{(0,\gamma_0)}\big(\exp(izx) -1 - izx\mathbbm{1}_{(0,1]}(x)\big)\nu(dx)$, we see that
\begin{align}
    |\varphi_t(z)| = \exp\Big(-t\int_{(0,\gamma_0)}\big(1 - \cos(zx)\big)\nu(dx) \Big). \label{normOfCF_lcont}
\end{align}

For any $M > 0$, we have (for any $z \neq 0$)
\begin{align*}
    \frac{\int_{x \geq \frac{M}{|z|}}\big( 1 - \cos(zx)  \big)\frac{dx}{x^{1 + \alpha}}  }{|z|^\alpha} & = \frac{\int_{x \geq \frac{M}{|z|}}\big( 1 - \cos(|z|x)  \big)\frac{dx}{x^{1 + \alpha}}  }{|z|^\alpha} \\
    &= \int_{M}^\infty \big(  1 -\cos{y} \big)\frac{dy}{y^{1+\alpha}}\ \ \ \text{by letting $y = |z|x$.}
\end{align*}
Therefore, we can fix some $M > \theta$ such that (for any $z \neq 0$)
\begin{align}
    \frac{1}{|z|^\alpha}\int_{x \geq M/|z|}\big( 1 - \cos(zx)  \big)\frac{dx}{x^{1 + \alpha}} \leq C_0/4. \label{endNonRVterm_lCont}
\end{align}

Moving on, we fix some $c > 0$ and consider the difference between $\int_{(0,\gamma_0)}\big( 1 - \cos(zx) \big)\nu(dx)$ and $\int_0^{M/z}\big(1 - \cos(zx)\big)\frac{c dx}{x^{1 + \alpha}}$. First, for any $z$ such that $|z|>M/\gamma_0$, we have
\begin{align}
    & \frac{1}{|z|^\alpha}\Big[ \int_{(0,\gamma_0)}\Big( 1 - \cos(zx) \Big)\nu(dx) - \int_0^{M/|z|}\Big( 1 - \cos(zx)\Big)c\frac{dx}{x^{1 + \alpha}} \Big] \nonumber \\
    \geq & \frac{1}{|z|^\alpha}\Big[ \int_{(0,M/|z|)}\Big( 1 - \cos(zx) \Big)\nu(dx) - \int_0^{M/|z|}\Big( 1 - \cos(zx)\Big)c\frac{dx}{x^{1 + \alpha}} \Big]\nonumber \\
    \geq & -\frac{1}{|z|^\alpha}\int_{\theta/|z|}^{M/|z|}\Big( 1 - \cos(zx)\Big)c\frac{dx}{x^{1 + \alpha}} \label{term1_RV_LCont} \\
    + & \frac{1}{|z|^\alpha}\Big[\int_{[\theta/|z|,M/|z|)}\Big( 1 - \cos(zx) \Big)\nu(dx) - \int_{\theta/|z|}^{M/|z|}\Big( 1 - \cos(zx)\Big)c\frac{dx}{x^{1 + \alpha}}   \Big]. \label{term2_RV_LCont}
\end{align}
For \eqref{term1_RV_LCont}, by letting $y = |z|x$ we have
\begin{align}
   & \frac{1}{|z|^\alpha}\int_{0}^{\theta/|z|}\big( 1 - \cos(|z|x) \big)c\frac{dx}{x^{1 + \alpha}} \leq \frac{c}{|z|^\alpha}\int_{0}^{\theta/|z|}\frac{z^2x^2}{2}\frac{dx}{x^{1+\alpha}}\nonumber  \\
   =& \frac{c}{2}\int_0^\theta y^{1 - \alpha} dy = \frac{c}{2}\cdot \frac{ \theta^{2 - \alpha} }{2 - \alpha}\nonumber \\
   & \leq c\cdot\frac{C_0}{8}\ \ \ \text{due to \eqref{chooseTheta_Lcont}}. \label{endTerm1RV_lCont}
\end{align}
For \eqref{term2_RV_LCont}, let us focus on the function on $\mathbb{R}$:
$$h(z) = 1 - \cos{z}.$$
Since $h(z)$ is uniformly continuous on $[\theta,M]$, we can find a sequence of real numbers $\{x_k \}_{k = 0}^{N}$ with $N \in \mathbb{N}, t_0 > 1$ such that
\begin{align}
   & x_0 = M, x_N = \theta;\nonumber \\
    &\frac{x_{j-1}}{x_j} = t_0\ \ \forall j = 1,2,\cdots,N;\nonumber \\
    & |h(x) - h(y)| < \delta\ \ \ \forall j = 1,2,\cdots,N,\ \forall x,y \in [x_j,x_{j-1}]. \label{uContOfG_lCont}
\end{align}
In other words, we use a geometrically decreasing sequence of points $\{x_0,x_1,\cdots,x_{N}\}$ to partition $[\theta,M]$ into $N+1$ intervals, on any of which the value of $h(z) = 1 - \cos{z}$ would not vary beyond the $\delta$ chosen in \eqref{chooseDelta_Lcont}.

At this step, fix some $\Delta > 0$ such that
\begin{align}
    (1 - \Delta)t_0^{\alpha+\epsilon} > 1. \label{chooseCapDelta_LCont}
\end{align}
Recall that for $g(y) = \nu[1/y,\infty)$, we have $g\in RV_{\alpha + 2\epsilon}$. Therefore, by Potter bounds (see Proposition 2.6 in \cite{resnick2007heavy}), we know the existence of some $Y_1 > 0$ such that for any $ t \geq 1$, we have
\begin{align}
    \frac{g(ty)}{g(y)} \geq (1 - \Delta)t^{\alpha + \epsilon} \ \ \forall y \geq Y_1. \label{potterBound_lCont}
\end{align}
On the other hand, define
$$\Tilde{g}(y) = cy^\alpha,$$
and we know that $\Tilde{g}(y) = \nu_c(1/y,\infty)$ where $\nu_c(dx) = c\mathbbm{1}_{(0,
\infty)}(x)\frac{dx}{x^{1+\alpha}}$.
The fact that $g \in RV_{\alpha + 2\epsilon}$ implies the existence of some $Y_2 > 0$ such that
\begin{align}
    g(y)\geq \frac{t_0^\alpha - 1}{ (1 - \Delta)t_0^{\alpha + \epsilon} - 1 }\cdot  \Tilde{g}(y)\ \ \ \forall y \geq Y_2. \label{gBound_lCont}
\end{align}
Let $\Tilde{M} = \max\{ M/\gamma_0, MY_1, MY_2 \}$. For $z \in \mathbb{R}$ with $|z| \geq \Tilde{M}$ and any $j = 1,2,\cdots,N$, the mass of $\nu$ on $[x_j,x_{j-1})$ is
\begin{align*}
    \nu[x_j/|z|, x_{j-1}/|z|) & = g(|z|/x_{j}) - g(|z|/x_{j-1}) \\
    & = g(t_0|z|/x_{j-1}) - g(|z|/x_{j-1}) \\
    & \geq g(|z|/x_{j-1})\cdot\Big( (1 - \Delta)t_0^{\alpha+\epsilon} - 1 \Big)\ \ \ \text{due to \eqref{potterBound_lCont}} \\
    & \geq \Tilde{g}(|z|/x_{j - 1})\cdot( t_0^\alpha - 1 )\ \ \ \text{due to \eqref{gBound_lCont}}
\end{align*}
while the mass of $\nu_c$ on $[x_j,x_{j-1})$ is
\begin{align*}
    \nu_c[x_j/|z|,x_{j - 1}/|z|) & = \Tilde{g}(|z|/x_j) - \Tilde{g}(|z|/x_{j-1}) \\
    & = (t_0^\alpha - 1)\Tilde{g}(|z|/x_{j-1}).
\end{align*}{}
Therefore, for any specific $z \in \mathbb{R}$ with $|z|\geq \Tilde{M}$, we can construct a measure $\nu^{(z)}$ such that
\begin{itemize}
    \item $\nu - \nu^{(z)}$ is a positive measure (in the sense of signed measure);
    \item for each $j = 1,2,\cdots,N$, for $\nu_c(dx) = c\mathbbm{1}_{(0,
\infty)}(x)\frac{dx}{x^{1+\alpha}}$, we have 
\begin{align}
    \nu^{(z)}[x_j,x_{j-1}) = \nu_c[x_j,x_{j-1}). \label{nu_z_construction_LCont}
\end{align}{}
\end{itemize}{}

Now in \eqref{term2_RV_LCont}, we have
\begin{align}
    & \frac{1}{|z|^\alpha}\Big[\int_{[\theta/|z|,M/|z|)}\Big( 1 - \cos(zx) \Big)\nu(dx) - \int_{\theta/|z|}^{M/|z|}\Big( 1 - \cos(zx)\Big)c\frac{dx}{x^{1 + \alpha}}   \Big] \nonumber \\
    =& \frac{1}{|z|^\alpha}\sum_{j = 1}^N\Big[  \int_{[x_j/|z|,x_{j-1}/|z|)}\Big( 1 - \cos(zx) \Big)\nu(dx) - \int_{x_j/|z|}^{x_{j-1}/|z|}\Big( 1 - \cos(zx)\Big)c\frac{dx}{x^{1 + \alpha}}  \Big]\nonumber \\
    \geq & \frac{1}{|z|^\alpha}\sum_{j = 1}^N\Big[  \int_{[x_j/|z|,x_{j-1}/|z|)}\Big( 1 - \cos(zx) \Big)\nu^{(z)}(dx) - \int_{x_j/|z|}^{x_{j-1}/|z|}\Big( 1 - \cos(zx)\Big)c\frac{dx}{x^{1 + \alpha}}  \Big]\ \ \text{due to $\nu - \nu^{(z)} \geq 0$}\nonumber \\
    \geq &  -\frac{\delta}{|z|^\alpha}\int_{\theta/|z|}^{M/|z|}c\frac{dx}{x^{1+\alpha}}\ \ \ \text{due to \eqref{uContOfG_lCont},\eqref{nu_z_construction_LCont}} \nonumber \\
    \geq & -\frac{\delta}{|z|^\alpha}\int_{\theta/|z|}^{\infty}c\frac{dx}{x^{1+\alpha}} = -c\delta/\theta^\alpha \nonumber \\
    \geq & -c\cdot\frac{C_0}{8}\ \ \ \text{due to \eqref{chooseDelta_Lcont}.} \label{endTerm2RV_lCont}
\end{align}{}

Plugging \eqref{endTerm1RV_lCont} and \eqref{endTerm2RV_lCont} back into \eqref{term1_RV_LCont} and \eqref{term2_RV_LCont} and also using \eqref{endNonRVterm_lCont}, we have that: for any $z\in\mathbb{R}$ with $|z|\geq\Tilde{M}$,
\begin{align}
    & \frac{1}{|z|^\alpha}\Big[   \int_{(0,\gamma_0)}\Big( 1 - \cos(zx) \Big)\nu(dx) - \int_0^{\infty}\Big( 1 - \cos(zx)\Big)c\frac{dx}{x^{1 + \alpha}}  \Big] \nonumber \\
    \geq & -cC_0(\frac{1}{4} + \frac{1}{8} + \frac{1}{8})\nonumber \\
    \geq & -\frac{C_0}{2}c;
\end{align}{}
on the other hand, by letting $y = |z|x$ again we have
\begin{align}
    & \int_0^{\infty}\Big( 1 - \cos(zx)\Big)c\frac{dx}{x^{1 + \alpha}} = c|z|^\alpha\int_{0}^\infty \big( 1 - \cos{y} \big)\frac{dy}{y^{1 + \alpha}}    \nonumber \\
    & = cC_0.
\end{align}{}

In summary, we have shown that: for any $t > 0$ and any $z$ with $|z| \geq \Tilde{M}$,
\begin{align*}
    |\varphi_t(z)| & = \exp\Big(-t\int_{(0,\gamma_0)}\big(1 - \cos(zx)\big)\nu(dx) \Big) \\
    & \leq \exp\big( -t\cdot\frac{cC_0}{2}|z|^\alpha \big).
\end{align*}{}

Lastly, by applying inversion formula, we know that: for any $t > 0$,
\begin{align*}
    \norm{f_{X_t}}_\infty & \leq \frac{1}{2\pi}\int |\varphi_t(z)|dz \\
    & \leq \frac{1}{2\pi}\Big( 2\Tilde{M} + \int_{|z| \geq \Tilde{M}}\exp\big( -t\cdot\frac{cC_0}{2}|z|^\alpha \big)dz \Big) \\
    & \leq \frac{1}{2\pi}\Big( 2\Tilde{M} + \int \exp\big( -t\cdot\frac{cC_0}{2}|z|^\alpha \big)dz \Big)\\
    & = \frac{1}{2\pi}\Big( 2\Tilde{M} + \frac{1}{t^{1/\alpha}}\int \exp(-\frac{cC_0}{2}|x|^\alpha)dx \Big)\ \ \ \text{by letting $x = zt^{1/\alpha}$}\\
    & = \frac{\Tilde{M}}{\pi} + \frac{C_1}{t^{1/\alpha}}
\end{align*}{}
where $C_1 = \int \exp(-\frac{cC_0}{2}|x|^\alpha)dx < \infty.$ We conclude the proof by stating that the constant $C$ in the claim of the Lemma can be set as
$$C = \frac{\Tilde{M}}{\pi} + C_1.\ \ \ $$
\end{proof}
Now we can see that Theorem \ref{thmDistX} follows immediately from the Lemma above. Indeed, if for a L\'evy process $X$ with generating triplet $(0,0,\nu)$, we know that, for some $\gamma_0 > 0$, we have $ (\nu - \mu)|_{(-\gamma_0,\gamma_0)} \geq 0$ where $\mu$ is a Borel measure on $\mathbb{R}$ and $\mu\big( (-\infty,-x]\cup[x,\infty) \big)$ is regularly varying at 0 with index $-(\alpha + \epsilon)$ for some $\alpha \in (0,2),\epsilon \in (0,2 - \alpha)$, then consider the following decomposition: $X_t = Y_t + Z_t$ where $\{Y_t: t \geq 0\}$ and $\{ Z_t: t \geq 0\}$ are two independent L\'evy processes with generating triplets $(0,0,\mu|_{(-\gamma_0,\gamma_0)})$ and $(0,0,\nu - (\mu)|_{(-\gamma_0,\gamma_0)})$ respectively. The discussion above implies the existence of $C>0$ such that
$$\norm{ f_{Y_t} }_\infty \leq \frac{C}{t^{1/\alpha}\wedge 1}\ \ \forall t > 0.$$
Now we have: for any $t>0$, $x \in \mathbb{R}$ and $\delta > 0$, 
\begin{align}
    \mathbb{P}(X_t \in [x,x+\delta]) & = \int_{\mathbb{R}}\mathbb{P}(Y_t \in [x-z,x-z+\delta])\mathbb{P}(Z_t = dz)\nonumber \\
    & \leq \frac{C}{t^{1/\alpha}\wedge 1}\delta \nonumber
\end{align}
which gives us exactly the bound in Theorem \ref{thmDistX}.

\end{document}